\newtheorem{proposition}{Proposition}
\newtheorem{lemma}{Lemma}
\newtheorem{definition}{Definition}
\newtheorem{assumption}{Assumption}
\newtheorem{theorem}{Theorem}
\newtheorem{remark}{Remark}
\newtheorem{corollary}{Corollary}
\acrodef{var}[VAR]{value-at-risk}
\acrodef{saa}[SAF]{sample average formulation}
\acrodef{sla}[SLF]{supervised learning formulation}
\acrodef{nwa}[NWF]{Nadaraya-Watson formulation}
\acrodef{nna}[NNF]{nearest neighbors formulation}
\acrodef{lska}[LSKA]{least squares kernel formulation}
\acrodef{cart}[CART]{classification and regression trees}
\def\thm@space@setup{%
\thm@preskip=1em \thm@postskip=0pt
}
\tikzstyle{block} = [draw,rectangle,minimum height=3em,minimum width=8em]
\tikzstyle{connector} = [->,thick]
\tikzstyle{line} = [thick]
\tikzstyle{branch} = [circle,inner sep=0pt,minimum size=1mm,fill=black,draw=black]
\tikzstyle{guide} = []
\pgfplotsset{compat=newest}
\newcommand{\norm}[1]{\left\|#1\right\|}
\newcommand{\E}[2]{\mb{E}_{#1\!\!}\left[#2\right]}
\newcommand{\Es}[2]{{\mathrm{E}}^n_{#1\!\!}\left[#2\right]}
\newcommand{\Esp}[2]{{\mathrm{E}}^{n,j}_{#1}\left[#2\right]}
\newcommand{\B}[2]{{B}\!\left(#1, #2\right)}
\newcommand{\abs}[1]{\left|#1\right|}
\newcommand{\one}[0]{\mathbb{1}}
\newcommand{\set}[2]{\left\{ #1\ : \ #2 \right\}}
\newcommand{\tset}[2]{\{ #1\ : \ #2 \}}
\newcommand{\tpose}{^\top}
\newcommand{\defn}[0]{:=}
\newcommand{\mc}{\mathcal}
\newcommand{\mb}{\mathbb}
\newcommand{\mf}{\mathtt}
\newcommand{\ms}{\mf}
\newcommand{\mr}{\mathrm}
\renewcommand{\tfrac}[2]{{#1}/{#2}}
\newcommand{\validate}{\mr{vd}}
\newcommand{\bootstrap}{\mr{bs}}
\newcommand{\train}{\mr{tr}}
\newcommand{\iid}{\sim_{\rm{iid}}}
\newcommand{\data}{\mr{data}}
\newcommand{\wc}{\mr{wc}}
\newcommand{\obs}{{x_0}}
\def\dist{\mr{dist}}
\def\erf{\mr{erf}}
\def\Re{\mr{R}}
\def\st{\mr{s.t.}}
\def\bs{\mr{bs}}
\def\lin{\mr{lin}}
\def\br{\mr{r}}
\DeclareMathOperator{\dom}{dom}
\DeclareMathOperator{\interior}{int}
\DeclarePairedDelimiter\ceil{\lceil}{\rceil}
\newtcolorbox{mybox}[2][]{
  colframe=black,
  colback=white,
  fonttitle=\bfseries,
  colbacktitle=white,
  coltitle=black,
  title=#2,
  #1}
\tikzstyle{block} = [draw, rectangle, minimum height=3em, minimum width=6em]
\tikzstyle{input} = [coordinate]
\tikzstyle{output} = [coordinate]
\title{Bootstrap Robust Prescriptive Analytics}
\author[1]{Dimitris Bertsimas\thanks{\href{mailto:dbertsim@mit.edu}{dbertsim@mit.edu}}}
\author[1]{Bart \mbox{Van Parys}\thanks{\href{mailto:vanparys@mit.edu}{vanparys@mit.edu}}}
\affil[1]{Operations Research Center, Massachusetts Institute of Technology}
\providecommand{\keywords}[1]{\textbf{\textit{Keywords: }} #1}
\begin{document}
\setlength{\baselineskip}{1.5em}

\maketitle

\begin{abstract}
  We address the problem of prescribing an optimal decision in a framework where the cost function depends on uncertain problem parameters that need to be learned from data.
  Earlier work proposed prescriptive formulations based on supervised machine learning methods.  These prescriptive methods can factor in contextual information on a potentially large number of covariates to take context specific actions which are superior to any static decision. When working with noisy or corrupt data, however, such nominal prescriptive methods can be prone to adverse overfitting phenomena and fail to generalize on out-of-sample data. 
  In this paper we combine ideas from robust optimization and the statistical bootstrap to propose novel prescriptive methods which safeguard against overfitting.
  We show indeed that a particular entropic robust counterpart to such nominal formulations guarantees good 
  performance on synthetic bootstrap data. As bootstrap data is often a sensible proxy to actual out-of-sample data, our robust counterpart can be interpreted to directly encourage good out-of-sample performance.
  The associated robust prescriptive methods furthermore reduce to convenient tractable convex optimization problems in the context of local learning methods such as nearest neighbors and Nadaraya-Watson learning.
  We illustrate our data-driven decision-making framework and our novel robustness notion on a small newsvendor problem.
\end{abstract}

\keywords{Data Analytics, Distributionally Robust Optimization, Statistical Bootstrap, Nadaraya-Watson Learning, Nearest Neighbors Learning}

\section{Introduction}

Most practical decisions need to be made despite the fact that their cost may be uncertain. Indeed, often the cost $L(z, y)$ of any potential decision $z$ depends on an unknown stochastic parameter $y$. Stocking goods under uncertain customer demand or devising profitable investment strategies when facing volatile returns are two practical examples. Stochastic optimization aims to minimize expected cost, i.e., 
\begin{equation}
  \label{prob:classical}
  z^\star \in \arg\min_{z} \,\E{Y^\star}{L(z, y)}
\end{equation}
where the random variable $y$ is distributed as $Y^\star$. We assume here implicitly that the minimization is carried out only over those decisions $z$ for which the cost function is well defined, i.e., $\dom( \E{Y^\star}{L(z, y)})$ . To avoid technicalities we make the standard assumption that the decision $z \in \Re^{\dim(z)}$ and the random variable $y \in \Re^{\dim(y)}$ take values in finite dimensional vector spaces.

Such stochastic optimization formulation makes sense when nothing beyond the distribution of the stochastic variable $y$ is known before a decision is to be made. Often however, additional information concerning a large number of covariates $x$ can be obtained \textit{before} we need to make our decision. After observing a particular context $x=\obs$, the decision should take this additional information into account and the problem in need of attention should not be the classical stochastic optimization problem \eqref{prob:classical} but rather
\begin{equation}
  \label{prob:data-analytics}
  z^\star(\obs) \in \arg\min_{z} \, \E{D^\star}{L(z, y) | x=\obs}
\end{equation}
where we assume that $\obs\in \Re^{\dim(x)}$ consists of all relevant contextual information. We assume that the context $x$ is a random variable with unknown distribution $X^\star$ whereas $\obs$ is one particular observed context. The distribution $D^\star$ represents here the joint distribution of both the parameter $y$ as well as the covariates $x$ whereas $\E{D^\star}{L(z, y) | x=\obs}$ denotes the expected cost of decision $z$ conditioned on the event $x=x_0$. Denote with $\mc D$ the set of all distributions supported on the set $\Omega\subseteq \Re^{\dim(x)\times\dim(y)}$ of all possible parameter and covariate realizations, then clearly $D^\star\in \mc D$. The optimal contextual decision $z^\star(\obs)$ has minimal cost as measured by the expected value of its loss $L(z^\star(\obs), y)$ conditioned\footnote{Technically, the conditional expectation $\E{D^\star}{L(z, y) | x}$ is a random variable and uniquely defined only up to events of measure zero. That is, the inclusion in \eqref{prob:data-analytics} can hold merely almost surely; see for instance the standard text by \citet{billingsley2008probability}. Slightly abusing notation all statements in this paper involving the observation $x_0$ should be interpreted to hold hence as $X^\star$-almost surely where $X^\star$ is the distribution of the covariates $x$.}
on all observed contextual information.
Unfortunately, distributions are never observed directly but rather must be learned from historical observations.

Only historical data concerning both uncertain parameters and contexts is typically available in practice. Data instead of distributions should hence be the primitive when making decisions under uncertainty. To that end we consider the training data set
\begin{equation}
  \label{data:supervised}
  \train \defn  [\bar d_1 = (\bar x_1, \bar y_1), \bar d_2 = (\bar x_2, \bar y_2), \dots].
\end{equation}
We use the symbol ``$\bar{\,\cdot\,}$'' throughout this paper to emphasize that the training data is here deterministic rather than random.  Data consisting of observations of the parameter $y$ as well as the context $x$ is often denoted as supervised training data.
Typically only a limited number of historical observations can be used as training data. We will denote with $\train[n]=[\bar d_1 = (\bar x_1, \bar y_1), \dots, \bar d_n = (\bar x_n, \bar y_n)]$ the training data set consisting of the first $n$ observations in Equation \eqref{data:supervised}.
We will assume that the data comes without any time dependence, or if it does, time is explicitly included as an additional covariate.
Consequently, most appropriate learning methods will be insensitive to the order of the data points. Hence, the information contained in the data set can be represented compactly through its empirical distribution $$D_{\train[n]} \defn \textstyle\frac 1n \sum_{(\bar x, \bar y)\in \train[n]}\delta_{(\bar x, \bar y)}.$$ To allow for a statistical analysis of data-driven formulations it is convenient to assume that the training data is obtained as independent samples from the unknown distribution $D^\star$, i.e., it is generated by
\begin{equation}
  \label{data:resampled}
  \data \defn [d_{1} = (x_{1}, y_{1})\iid D^\star,~d_{2} = (x_{2}, y_{2})\iid D^\star,~ \dots] \sim D^{\star\infty}.
\end{equation}
Here, we make a distinction between $D^{\star\infty}$ as the data generating process and $D^\star$ the unknown joint distribution between the parameters $y$ and covariates $x$. The training set is hence supported on the empirical support set $\Omega_n$ which is a subset of $\Omega$. When we denote with $\mc D_n$ the set of all distributions supported on the set $\Omega_n$, then clearly $D_{\train[n]}\in \mc D_n\subseteq \mc D$.

\subsection{Data-Driven Formulations}
\label{ssec:data-driv-form}

In this section we briefly position our approach among several distinct alternatives to decision-making with data. Making decisions based on data has received considerable attention in the literature. 
We restrict attention to three distinct data-driven approaches which result in computationally attractive formulations.

\textit{Empirical-Risk-Minimization:}
The prescription problem \eqref{prob:data-analytics} can alternatively be defined through the risk minimization formulation $z^\star(\cdot)\in \arg \min_{z\in \mc F} \, \E{D^\star}{L(z(x), y)}$ over the set of all functions $\mc F$ mapping covariates to decisions. One could hence consider
\begin{equation}
  \label{eq:emp_risk}
  z^{}_{\train[n]}(\cdot) \in \arg\min_{z(\cdot)\in \mc C} ~\frac{1}{n}\textstyle\sum_{(\bar x, \bar y)\in \train[n]}L(z(\bar x), \bar y)
\end{equation}
where $\mc C$ is a set of functions with suitable properties. Here $\mc C$ must be a strict subset of the set of all functions $\mc F$ as to avoid overfitting and encourage generalization. \citet{rudin2014big} consider the class $\mc C= \mc F_{\lin}$ consisting of all linear functions. The empirical risk minimization problem \eqref{eq:emp_risk} then reduces to a convex problem in the coefficients describing the linear functions in $\mc F_{\lin}$. \citet{rudin2014big} also establish out-of-sample guarantees in the particular context of a newsvendor problem. \citet{bertsimas2014predictive} prove similar out-of-sample guarantees for the more general prescription problems considered here and show in particular that for any $\delta>0$ the probability of drawing random data according to Equation (\ref{data:resampled}) and observing
\begin{equation}
  \label{eq:nk-oos}
  \E{D^\star}{L(z(x), y)}\leq \frac{1}{n}\textstyle\sum_{(\bar x, \bar y)\in \data[n]}L(z(\bar x), \bar y)+\bar L\sqrt{\log(1/\delta)/2n}+\bar G R_n(\mc C)
\end{equation}
is at least $1-\delta$ for all functions $z(\cdot)\in \mc C$. The previous inequality bounds, if $z^\star(\cdot)\in \mc C$, the difference between the empirical average cost and the actual expected cost of not only the optimal prescription but in fact all feasible functions in $\mc C$. The constants $\bar L$ and $\bar G$ are taken to satisfy $\sup_{z, \, y}L(z, y)\leq \bar L$ and $\sup_{z\neq z',\,y} \tfrac{(L(z, y)-L(z', y))}{\norm{z-z'}_\infty}\leq \bar G$, respectively. The Rademacher complexity is a measure of the size of the class of considered prescriptive functions $\mc C$ and is defined as $$R_n(\mc C) \defn \E{}{\textstyle\frac{2}{n} \sup_{z(\cdot)\in \mc C} \sum_{ d\in \data[n]}\sum_{k=1}^{\dim(x)} \sigma_{ d, k} z_k(\bar x)}$$
where $\sigma_{\bar d, k}$ are independent and equiprobably $\{+1, -1\}$ and the data is generated as in Equation (\ref{data:resampled}). \citet{bertsimas2014predictive} show furthermore that $R_n(\mc F_\lin) \leq \mc O(\tfrac{1}{\sqrt{n}})$ and hence, as expected, when more data is observed stronger out-of-sample guarantees can be established.

By explicitly constraining the considered prescriptors to be elements of $\mc C$, however, bias is introduced when $z^\star(\cdot)\not\in \mc C$. There is indeed no reason in general to expect that $z^\star(\cdot)$ should possess for instance a linear parametric structure. As pointed out by \citet{bertsimas2014predictive} constraints on the decision may give rise to nonlinearities which are particularly challenging to deal with. \citet{rudin2014big} do consider nonlinear prescriptions $z_{\train[n]}(\cdot)$ implicitly via the introduction of auxiliary nonlinear transformed covariates. In this paper we will however focus on formulations of an entirely different nature instead.

\textit{Estimate-First-Optimize-Later:}
An alternative to the one-shot approach taken by empirical risk minimization formulations is to separate the problem of prescription with covariate information into an estimation and a subsequent optimization step as visualized in Figure \ref{fig:estimate-first-optimize-later}.
In the estimation step an estimator\footnote{Our notation here alludes to the fact that this estimator function mapping data to cost estimates is often asymptotically with $n\to\infty$ an unbiased estimate of the actual unknown expected cost $z\mapsto \E{D^\star}{L( z, y)|x=\obs}$. In our paper the symbol ``$\rm{E}$'' will be associated with an estimator (a procedure mapping data to cost estimates) while ``$\mb E$'' denotes an expectation operator which may define for instance the actual but unknown expected cost.}
$z\mapsto \Es{D_{\train[n]}}{L( z, y)|x=\obs}$ is considered which based on the provided training data estimates the cost of any decision in the context of interest. Such estimators can be obtained using a variety of machine learning methods. \citet{hannah2010nonparametric} consider the classical kernel smoothing method proposed independently by \citet{watson1964smooth, nadaraya1964estimating}. \citet{bertsimas2014predictive} consider additional formulations based on nearest neighbors learning discussed in \citet{altman1992introduction}, and regression trees and random forest learning proposed by \citet{breiman2001random}. We also want to mention that \citet{elmachtoub2021smart} refer to this two-step approach as ``predict, then optimize'' and discuss several of its limitations and suggest improvements.

Estimate-first-optimize-later formulations subsequently prescribe the action
\begin{equation}
  \label{eq:eto}
  z_{\train[n]}(\obs) \in \arg \min_{z}\,\Es{D_{\train[n]}}{L(z, y)|x=\obs}.
\end{equation}
We will consider in this paper estimate-first-optimize-later formulations based on local learning methods such as Nadaraya-Watson and nearest neighbors learning. In Section \ref{sec:prescriptive-analytics} we in fact indicate that both can be generalized using the variable metric estimator discussed by \citet{sain2002multivariate}. \citet{bertsimas2014predictive} prove that both the Nadaraya-Watson and nearest neighbors formulations are asymptotically consistent under mild assumptions on the training data and loss function. This in stark contrast to empirical risk minimization formulations which are biased whenever $z^\star(\cdot) \not \in \mc C$. Empirical risk minimization formulations on the other hand provide through the complexity of the set $\mc C$ an explicit way to remedy adverse overfitting effects. Estimate-first-optimize-later formulations, although often empirically observed to perform well out-of-sample by \citet{rudin2014big, ,bertsimas2014predictive}, come without such an immediate remedy when they do overfit. It is indeed well known that unbiased estimators based on Nadaraya-Watson and nearest neighbors learning typically suffer a large variance in particular when the covariate dimension is large; see \citet{friedman2001elements}[Section 2.5]. Subsequent minimization of the unbiased estimator $z\mapsto\Es{D_{\train[n]}}{L(z, y)|x=\obs}$ only amplifies this issue and can results in overly optimistic prescriptors as pointed out by \citet{michaud1989markowitz} and \citet{rudin2014big}. The performance of estimate-first-optimize-later formulations on out-of-sample data can in such settings be very poor.

\begin{figure}
  \centering
  \begin{tikzpicture}[node distance=7cm]

    \node[input, name=data] {};
    \node [block, right of=data, node distance=3.5cm] (predictor) {Estimator};
    \draw[->] (data) -- node[above] {Data} (predictor);
    \draw[->] (data) -- node[below] {$D_{\train[n]}$} (predictor);
    \node [block, right of=predictor] (opt) {Optimizer};
    \draw [->] (predictor) -- node[above] {Cost} (opt);
    \draw [->] (predictor) -- node[below] {$\Es{D_{\train[n]}\!}{L(z, y)|x=\obs}$} (opt);
    \node [right of=opt, node distance=4cm] (dec) {};
    \draw[->] (opt) -- node[above] {Decision} (dec);
    \draw[->] (opt) -- node[below] {$z_{\train[n]}(\obs)$} (dec);
    
  \end{tikzpicture}
  \caption{Estimate-first-optimize-later formulations construct first a cost estimator $z\mapsto\Es{D_{\train[n]}\!}{L(z, y)|x=\obs}$ and subsequently optimize for a decision promising minimum (estimated) cost as in Equation (\ref{eq:eto}). }
  \label{fig:estimate-first-optimize-later}
\end{figure}

\textit{Robust-Budget-Minimization: } It is clear that when given only a limited amount of training data, data-driven formulations must be guarded against overfitting to one specific training data set. Overfitting can be discouraged by minimizing a regularized objective function
\begin{equation}
  \label{eq:bm}
  z_{\train[n]}(\obs)\in\arg\min_z \, \{c_{n}(z, D_{\train[n]}, \obs)= \Es{D_{\train[n]}}{L(z, y)|x=\obs} + J_{\train[n]}(z, \obs)\}
\end{equation}
consisting of both an asymptotically unbiased cost estimate and an additional non-negative regularization term. The regularization term  makes the budgeted cost of any decision conservative but hopefully ensures decisions whose actual costs do not break the estimated budget. We will  denote in this paper events in which the estimated budget of the suggested action does not cover its actual cost as \textit{disappointing}. One way of regularizing is by considering a robust counterpart formulation
\begin{equation}
  \label{eq:dro}
  z_{\train[n]}(\obs)\in\arg\min_z \, \{c_{n}(z, D_{\train[n]}, \obs)= \textstyle\sup_{D \in \mc A(D_{\train[n]})} \,\Es{D}{L(z, y)|x=\obs}\}
\end{equation}
where the ambiguity set $\mc A(D_{\train[n]})$ typically consists of all distributions consistent with the observed training data instead of merely the empirical distribution $D_{\train[n]}$. As the robust counterpart is with respect to a set of distributions, formulation (\ref{eq:dro}) takes a so-called distributionally robust perspective.

Distributionally robust optimization has attracted significant attention in the context of classical stochastic optimization without contextual information as it can present a disciplined safeguard mechanism against overfitting. By using a robust counterpart with respect to an ambiguity set of distributions around an estimated nominal one, they were shown by \citet{vanparys2017data} to be minimally biased while still enjoying statistical out-of-sample guarantees. Many interesting choices of the ambiguity set furthermore result in a tractable overall decision-making approach. The ambiguity set can be defined, for example, through confidence intervals for the distribution's moments as done by \citet{popescu2005semidefinite, delage2010distributionally, zymler2013distributionally,wiesemann2014distributionally, vanparys2016generalized}. Alternatively, \citet{wang2009likelihood} use an ambiguity set that contains all distributions that achieve a prescribed level of likelihood, while \citet{bertsimas2014robust} based theirs on distributions which pass a statistical hypothesis test. Distance-based ambiguity sets which contain all distributions sufficiently close to a reference with respect to probability metrics \citep{ben2013robust} such as the Prokhorov metric \citep{erdogan2006}, the Wasserstein distance \citep{pflug2007,esfahani2015data}, or the total variation distance \citep{sun2016} have received a lot of attention in the recent literature. Surprisingly, the merits of distributional robustness in the context of prescription problems with contextual information has not been studied in depth. One major contribution of this paper is to show that the distributional robust perspective applies just as well to prescription problems with contextual information. Furthermore, we show in Section \ref{sec:bootstrap-performance} that for a judiciously chosen ambiguity set $\mc A(D_{\train[n]})$ the resulting robust formulations enjoy interpretable performance guarantees in the context of bootstrapped ``out-of-sample'' data.

\subsection{Motivating Example} \label{sec:newsvendor}

Before we formally introduce our novel robustness notion based on bootstrap aggregation, we motivate its main idea with the help of an intuitive example which we will explore further in Section \ref{ssec:news-vendor}. We single out the {newsvendor problem, which is a common model in economics and operations management, for its simplicity. Consider a newspaper vendor who must decide how many copies $z$ to stock each day in the face of uncertain customer demand $y$ well knowing that any unsold copies will be worthless at the end of each day.
Ideally, the newsvendor would like to order exactly $z=y$ newspaper as they are a perishable good. Unfortunately, a decision on the order quantity needs to be made before the demand is observed.
Fortunately, however, the newsvendor can observe before making the order several covariates $x=\obs$ which may correlate with the uncertain demand.
A sensible goal is to order a quantity that minimizes the total expected cost according to 
\begin{equation}
  \label{prob:news-vendor}
  \begin{array}{rl}
  z^\star(\obs)\in\arg\inf_z & \E{D^\star}{ b \cdot (y-z)^{+ } + h \cdot (z-y)^{+ }  \big| x = \obs}
  \end{array}
\end{equation}
where $D^\star$ is the joint distribution between costumer demand and observed covariates.
The constants \(b \in \Re_+\) and \(h \in \Re_+\) represent here the marginal opportunity cost of ordering too few newspapers and the loss in revenue associated with ordering surplus newspapers. If the distribution $D^\star$ is known, then a classical result states that the optimal decision is given by the conditional quantile
\begin{equation}
  \label{eq:sol-news-vendor}
   z^\star(\obs) := \inf \set{z\in\Re}{ \E{D^\star}{\mathbb 1\{y\leq z\} \big | x = \obs } \geq \tfrac{b}{(b+h)}}
 \end{equation}
of the demand distribution in the covariate context of interest.

The classical newsvendor formulation assumes the joint distribution \( D^\star \) between demand and covariates to be known. In practice however distributions are never observed directly but rather must be estimated from a finite amount of training data.
Let us for the sake of exposition single out one particular formulation based on nearest neighbors learning. That is, we consider
\begin{equation}
  \label{eq:data-driven-newsvendor}
  z_{\train[n]}(\obs) := \arg\min_z ~\left\{c_{n}(z, D_{\train[n]}, \obs) = \frac{1}{k}\textstyle\sum_{(\bar x, \bar y)\in N^k_n(\obs)} b \cdot (\bar y-z)^{+ } + h \cdot (z-\bar y)^{+ }\right\}
\end{equation}
where $N^k_n(\obs)$ denotes here the $k$ nearest neighbors to the covariate of interest in the provided training data set. Hence, our decision minimizes the budgeted cost as estimated by a classical nearest neighbors learning method. In Figure \ref{fig:motivational_example} we visualize this cost estimate as a function of the context of interest to illustrate the nature of the nearest neighbors estimator.

Remark that our data-driven decision $z_{\train[n]}(\obs)$ only minimizes the \textit{predicted} cost in the context of interest rather than the unknown out-of-sample \textit{actual} cost. This implies that once the unknown cost realizes it may exceed the predicted cost budgeted by the newsvendor. Clearly, such disappointment events are very undesirable and may result in financial difficulties for the newsvendor who must now find additional funds to cover the difference. A standard practice in machine learning to quantify the probability of disappointment events is to consider the performance of our decision on validation data. Suppose indeed we could obtain access to independent validation data sets $\validate[n]\in V$ each containing the same number of data points as the training data set. A practical measure for disappointment would be the fraction
\begin{equation}
  \label{eq:val-disappointment}
  \frac{1}{{\abs{V}}}{\textstyle\sum_{\validate[n]\in V} \mb 1\{\Es{D_{\validate[n]}}{L(z_{\train[n]}(\obs), y)|x=\obs}\geq c_{n}(z_{\train[n]}(\obs), D_{\train[n]}, \obs)\}}
\end{equation}
of all validation sets based on which an independent cost estimate for our decision $z_{\train[n]}(\obs)$ breaks our estimated original budget.
Notice that the previous metric critically relies on our ability to obtain large quantities of validation data which in practice may be impossible or very expensive to obtain. We want a measure of disappointment for our cost estimate without access to such independent validation data. 

We could try to make up for not having access to independent validation data by considering a resample method instead.
Bootstrapping according to \citet{efron1982jackknife} is the process of resampling with replacement a proxy validation data set from the original training data. This resampling procedure is described formally as the stochastic process
\begin{equation}
  \label{data:bootstrapped}
  \bootstrap \defn [(x_{1, r}, y_{1, r})\iid D_{\train[n]},  M_{2, r} = (x_{2, r}, y_{2, r})\iid D_{\train[n]},\dots]\sim D_{\train[n]}^\infty
\end{equation}
of resampling independent data points from the empirical distribution of the training data. Again we make a distinction between the bootstrap data generation process $D_{\train[n]}^\infty$ and the empirical distribution $D_{\train[n]}$ of the training data.
The bootstrap method hence generates its own synthetic validation data sets directly from the training data. Bootstrap data sets so obtained are merely proxies to actual validation data which ought to be drawn from the stochastic process stated in (\ref{data:resampled}) which generated the training data. In the context of budget minimization formulations, we can sample bootstrap data sets $\bootstrap[n] \in B$ and use the fraction
\begin{equation}
  \label{eq:bootstrap-approx}
  \frac{1}{\abs{B}}{\textstyle\sum_{\bs[n]\in B} \mb 1\{\Es{D_{\bootstrap[n]}}{L(z_{\train[n]}(\obs), y)|x=\obs}\geq c_{n}(z_{\train[n]}(\obs), D_{\train[n]}, \obs)\}}
\end{equation}
as a proxy to the out-of-sample disappointment \eqref{eq:val-disappointment} on actual validation data. As bootstrapping does not require access to any validation data it is a practically viable approach to quantify the out-of-sample performance of budget minimization formulations; see also Figure \ref{fig:motivational_example}.

\begin{figure}
  \begin{subfigure}{0.45\textwidth}
    \centering
    \includegraphics[width=\textwidth]{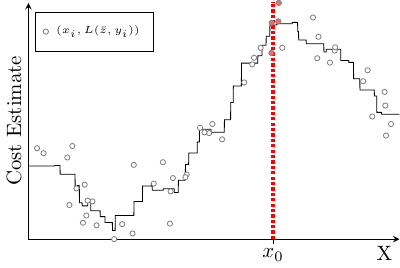}
    \caption{Nominal Cost}
  \end{subfigure}%
  \hfill
  \begin{subfigure}{0.45\textwidth}
    \centering
    \includegraphics[width=\textwidth]{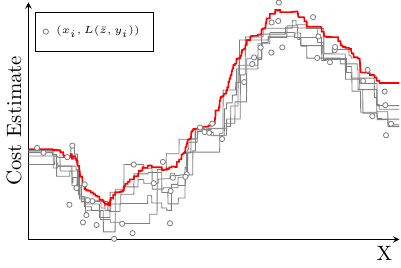}
    \caption{Bootstrap Robust Cost}
  \end{subfigure}
  \caption{Figure (a) illustrates the cost budgeted for a given decision $\bar z$ as a function of the context of interest $x=\obs$ using the nearest neighbors formulation \eqref{eq:data-driven-newsvendor}  with $k=4$. That is, the cost estimate at the context of interest is the average of the empirical costs $L(\bar z, y_i)$ associated with the four training data points $(x_i, y_i)\in N^k_n(\obs)$ indicated in red closest to the context of interest. As these nearest neighbors $N^k_n(\obs)$ are very sensitive to the particular context of interest the associated cost function indicated in black is discontinuous. Figure (b) depicts the associated cost estimates in gray using the nearest neighbors formulation based on five random bootstrap data sets. Evidently, as each bootstrap data set is slightly different the associated cost estimates will differ slightly as well. Definition \ref{def:bootstrap-robust} calls a cost estimate bootstrap robust if only a small fraction $b$ of all such bootstrapped cost estimates would break budget. We depict in red one such robust cost estimate ($b=0.05$) computed empirically as the corresponding quantile of the cost estimates based on $\abs{B}=2000$ bootstrap data sets. When these bootstrap data sets are good proxies for actual out-of-sample data and if nearest neighbors is an appropriate learning method then the newsvendor can reasonably hope that the associated robust prescription and budget will not disappoint.}
  \label{fig:motivational_example}
\end{figure}

\subsection{Bootstrap Aggregation}

In what follows we will in fact consider the limiting bootstrap out-of-sample disappointment in which the number of bootstrap resamples $\abs{B}$ tends to infinity and can formally be defined as
\begin{equation}
  \label{eq:bs-disappointment}
  D^{\infty}_{\train[n]}[\Es{D_{\bootstrap[n]}}{L(z_{\train[n]}(\obs), y)|x=\obs}\geq c_{n}(z_{\train[n]}(\obs), D_{\train[n]}, \obs)\}]
\end{equation}
as our metric of out-of-sample performance. That is, the bootstrap disappointment probability is the probability that the random predicted cost of our decision as measured by a particular estimator on bootstrap data breaks the original budget for our decision $z_{\train[n]}$. 
We will present in this paper budget minimization formulations for which the  bootstrap disappointment can be controlled explicitly. To do so we introduce our notion of the bootstrap robust counterpart of an estimate-first-predict-later formulation.

\begin{definition}[Bootstrap Robust Counterpart]
  \label{def:bootstrap-robust}
  The budget function $c_n$ and its associated prescriptor $z_{\train[n]}(\obs)\in\arg \min_z \, c_n(z, D_{\train},\obs)$ are said to be the bootstrap robust counterpart with disappointment $b\in [0, 1)$ of an estimate-first-optimize-later formulation with estimator $\Es{D_{\train[n]}}{L(z_{\train[n]}(\obs), y)|x=\obs}$ if we have 
\begin{equation}
  \label{eq:bootstrap-confidence-sla}
  D_{\train[n]}^\infty \left[\Es{D_{\bootstrap[n]}}{L(z_{\train[n]}(\obs), y)|x=\obs} > c_n(z_{\train[n]}(\obs), D_{\train[n]}, \obs)\right] \leq b.
\end{equation}
\end{definition}

Notice however that the bootstrap out-of-sample metric and counterpart as described before are entirely descriptive and do not suggest how to ensure a budget minimization approach does in fact enjoy such good performance. In Section \ref{sec:bootstrap-performance} we will present a disciplined approach to do so based on a distributionally robust approach.

The careful reader may remark that data-driven prescription formulations which are bootstrap robust do \textit{not} promise any theoretical performance guarantees on actual out-of-sample data. Indeed, bootstrap robustness is identified with small disappointment on bootstrap data as opposed to actual out-of-sample data as we have illustrated in Figure \ref{fig:motivational_example} in the context of the newsvendor problem. That bootstrap data serves as a sensible proxy to actual out-of-sample data was advanced already by \citet{efron1982jackknife}. Hence, one may hope that good performance on bootstrap data will translate to good performance on out-of-sample data as well. 

One could try to aim for stronger theoretical guarantees regarding the out-of-sample performance of our proposed robust formulations in the spirit of Equation (\ref{eq:nk-oos}) as done by \citet{rudin2014big, bertsimas2014predictive}. However, such theoretical guarantees require parametric assumptions on the prescription function, i.e., $z^\star\in \mc C$. Such assumptions are hard to justify in the context of many practical problems.
Even for those problems where such assumptions can be justified, it is unclear whether the Rademacher complexity of a general parametric class of functions $\mc C$ can be determined numerically or whether the provided guarantee is in fact not overly pessimistic. 
Instead, we will illustrate in Section \ref{sec:numerical-examples} the empirical efficacy of our bootstrap robust formulations in the context of a small newsvendor problem.

\subsection{Main Contributions}
\label{ssec:main-contributions}

The three main contributions presented in this paper are as follows.

\begin{enumerate}
\item We show that both the classical Nadaraya-Watson and nearest neighbors formulations admit tractable robust counterparts as discussed in Equation (\ref{eq:dro}) for a large variety of ambiguity sets $\mc A \subseteq \mc D_n$. In fact we will do so by treating either formulation as a special case of a more general variable metric estimation formulation. We prove in Section \ref{sec:robust-prescriptive-analytics} that the resulting budget minimization formulations are computationally as tractable as their nominal counterparts. When for instance the estimate-first-optimize-later formulation reduces to a tractable convex optimization problem then so will its robust counterpart. The previous crucial observation testifies to the practical viability of our robust budget minimization formulations.

  Robust Nadaraya-Watson formulations have been proposed before, most notably by \citet{hanasusanto2013robust} in the context of robust dynamic programming. However, we point out in Remark \ref{rem:hanasusanto} that our robust Nadaraya-Watson formulation is of a different nature. Hence, even in the particular context of the Nadaraya-Watson formulation, our application of a distributional robust perspective for prescription problems with contextual information is novel.

\item One particular distributionally robust counterpart based on the relative entropy ball is proven to safeguard against overfitting on bootstrap data as discussed in Definition \ref{def:bootstrap-robust}. We derive in Section \ref{sec:bootstrap-performance} practical finite sample bootstrap performance guarantees regarding the resulting robust \acl{sla}. For this particular bootstrap robust counterpart we further derive a more explicit tractable reformulation based on convex duality. Although we do not prove that our bootstrap robust formulations enjoy guaranteed out-of-sample performance, we demonstrate empirically that this is nevertheless the case in the context of a small newsvendor problem discussed in Section \ref{sec:numerical-examples}.
  
\item Finally, we published a \texttt{Julia} implementation of the algorithms in this work at \url{https://gitlab.com/vanparys/BootstrapRobustAnalytics.jl}.
\end{enumerate}

\section{Estimate-First-Optimize-Later Prescriptions}
\label{sec:prescriptive-analytics}

\acresetall

The estimate-first-optimize-later formulations which we defined in Equation \eqref{eq:eto} are distinct only in so far they are based on  different cost estimates. A large variety of methods in machine learning can provide such estimates. We will focus here solely on the Nadaraya-Watson formulation of \citet{hannah2010nonparametric, rudin2014big} and the nearest neighbors formulation  introduced by \citet{bertsimas2014predictive}. In fact, we will treat here both formulations simultaneously by considering a generalization based on the estimator discussed in \citet{sain2002multivariate}. As this estimator is a local learning method it is based on the concept of neighborhoods depicted visually in Figure \ref{fig:nearest-neighbors}.

\begin{figure}
  \centering
  \includegraphics[width=0.45\textwidth]{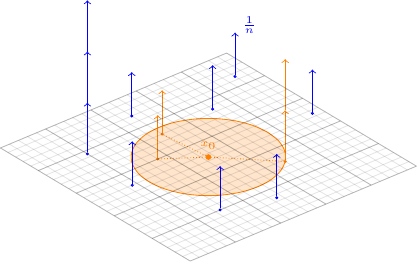}
  \caption{The three and four nearest neighbors (in orange) of the context of interest $\obs$. We depict the neighborhood set $N^3_n(\obs)$ as the orange circle. This neighborhood contains both the three and four nearest neighbors around $\obs$ as the most distant nearest neighbor was seen twice in the training data.}
  \label{fig:nearest-neighbors}
\end{figure}

\begin{definition}[Distance \& Neighborhood]
  Let $\dist(\bar d, \obs)\geq 0$ for all $\bar d\in\Omega$. Assume this distance function enjoys the discrimination property $\dist(\bar d, \bar x) = \dist(\bar d', \bar x) \iff \bar d= \bar d'$ for all $\bar d$, $\bar d'$ in $\Omega$. Let
\[
  \begin{array}{rl}
    & N^j_n(\obs)  \defn \set{\bar d \in \train[n]}{\dist(\bar d, \obs)\leq R^j_{n}}, ~\rm{with}~ R^j_{n} \defn \min\, \set{R\geq 0}{\abs{\set{\bar d \in \train[n]}{\dist(\bar d, \obs)\leq R}}\geq j},
  \end{array}
\]
each containing those $j$ distinct points in the training data closest to our context of interest $\obs$.
\end{definition}

The particular distance $\dist(\bar d, \obs)$ should ideally reflect how relevant an observation $\bar d$ is to our context of interest $x=\obs$. A common choice\footnote{Notice that this distance function does not possess the discrimination property. Indeed, when $\norm{\bar x_i-\obs}_2=\norm{\bar x_j-\obs}$ for $\bar x_i\neq \bar x_j$ we have a tie. The lack of discrimination property translates in ambiguously defined neighborhood sets. The discrimination property may be recovered by deterministically breaking ties based for instance on the value of $\bar y$. \citet{gyorfi2006distribution} propose a randomized alternative by augmenting the covariates with an independent auxiliary uniformly distributed random variable on $[0,1]$. They prove that by doing so ties occur with probability zero.} is to define the distance as a monotonically decreasing function of the Euclidean distance  $\norm{\bar x-\obs}_2$ between the covariates.
\begin{definition}[Estimation formulation]
  \label{def:balloon:nom}
  Our estimation formulation minimizes the weighted average
  \begin{equation}
    \label{eq:balloon:nom}
    z_{\train[n]}(\obs)\in \arg\min_z \,\left\{\Es{D_{\train[n]}}{L(z, y)|x=\obs} \defn \frac{\E{D_{\train[n]}}{ L(z, y)\cdot w_n(x,\obs)\cdot \one\{(x,y)\in N^k_n(\obs)\}}}{\E{D_{\train[n]}}{w_n(x,\obs)\cdot \one\{(x,y)\in N^k_n(\obs)\}}}\right\}
  \end{equation}
  using a positive weighing function $w_n$ over the smallest data neighborhood around the context  $x=\obs$ containing no less than $k$ out of $n$ observations.
\end{definition}

Our estimator hence considers only data within the smallest neighborhood around its context of interest $x=\obs$ containing no less than $k$ out of $n$ observations and is insensitive to all other data points. Within the neighborhood $N^k_{n}(\obs)$ each data point $(\bar x, \bar y)$ is weighted as $w_n(\bar x, \bar y)\geq 0$ with the help of a weighing function. It can easily be verified that this estimator is insensitive to the order of the data points in the training data set. Hence, the estimator is indeed only a function of the training data through its empirical distribution and the total number of samples $n$. Furthermore Assumption \ref{ass:convex_cost_model}, which we will assume holds throughout the remainder of this paper, guarantees that formulation (\ref{eq:balloon:nom}) is convex.

\begin{assumption}[Loss function]
  \label{ass:convex_cost_model}
  The loss function $L(\bar z, \bar y)$ in $\Re_+ \cup \{+\infty\}$ is convex in $\bar z$ for any $\bar y$ and a measurable function of $\bar y$ for any $\bar z$. 
\end{assumption}

\subsection{Nadaraya-Watson and Nearest Neighbors Learning}
\label{sec:stat-cons}

Formulation \eqref{eq:balloon:nom} has several hyper-parameters which have to be chosen with care; the distance function, the number of neighbors, and the weighing function. The statistical properties of the formulation  depend on how these hyper-parameters are chosen. We do not try to establish here the statistical consistency of the formulation in its greatest generality. Rather, we briefly discuss the statistical consistency of its two most common special cases: (i) Nadaraya-Watson and (ii) nearest neighbors learning.

\begin{figure}
  \begin{subfigure}{0.5\textwidth}
    \centering
    \renewcommand{\arraystretch}{1.3}
    \begin{tabular}{|lr|}
      \hline
      Name  & Smoother $S$ \\
      \hline
      Uniform & $\frac12 \one_{\norm{\Delta x}\leq 1}$ \\
      Epanechnikov & $\frac34 (1-\norm{\Delta x}^2)\one_{\norm{\Delta x}\leq 1}$ \\
      Tricubic & $\frac{70}{81}(1-\norm{\Delta x}^3)^3\one_{\norm{\Delta x}\leq 1}$ \\
      Gaussian & $\exp{(-\norm{\Delta x}^2/2)}/\sqrt{2 \pi}$ \\
      \hline
    \end{tabular}
    \vfill
  \end{subfigure}%
  \begin{subfigure}{0.5\textwidth}
    \centering
    \includegraphics[width=0.8\textwidth]{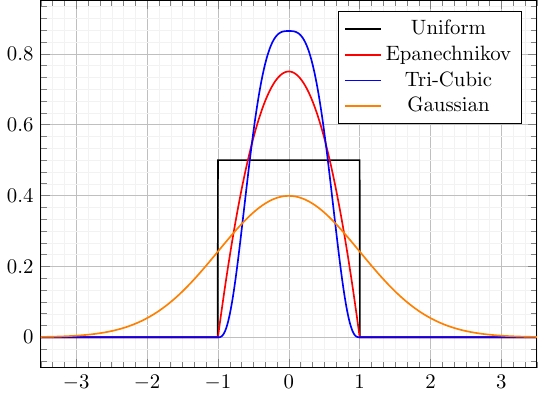}
  \end{subfigure}
  \caption{A comparison of popular common smoother functions $S$.}
  \label{fig:kernels}
\end{figure}

The Nadaraya-Watson formulation of \citet{hannah2010nonparametric, rudin2014big} is identified with the particular choice $k(n)=n$. The Nadaraya-Watson formulation minimizes indeed the weighted average using a positive weighting function $w_n$ over all observations. Typically, the weighing function is taken to be $w_n(\bar x, \obs) = S(\norm{\bar x-\obs}_2/h(n))$ with the help of a positive smoothing function $S$ and bandwidth parameter $h(n)$. Some common popular choices of smoothers are given in Figure \ref{fig:kernels}.  

\begin{mybox}{Example: Nominal Nadaraya-Watson Formulation}
  \begin{equation}
    \label{def:nw}
    z_{\train[n]}(\obs)\in \arg\min_z \,\left\{\Es{D_{\train[n]}}{L(z, y)|x=\obs} \defn \frac{\E{D_{\train[n]}}{ L(z, y)\cdot w_n(x,\obs)}}{\E{D_{\train[n]}}{w_n(x,\obs)}}\right\}.
  \end{equation}
\end{mybox}

Choosing the trivial weighing function $w_n(x, \obs)=1$ on the other hand yields precisely the nearest neighbors formulation already introduced in Section \ref{sec:newsvendor}. The nearest neighbors formulation indeed minimizes the historical average loss
restricted to the smallest data neighborhood around the context $x=\obs$ containing no less than $k$ observations.
\begin{mybox}{Example: Nominal Nearest Neighbors Formulation}
  \begin{equation}
    z_{\train[n]}(\obs)\in \arg\min_z \,\left\{\Es{D_{\train[n]}}{L(z, y)|x=\obs} \defn \frac{\E{D_{\train[n]}}{ L(z, y)\cdot\one\{(x,y)\in N^k_n(\obs)\}}}{\E{D_{\train[n]}}{\one\{(x,y)\in N^k_n(\obs)\}}}\right\}.
  \end{equation}
\end{mybox}
  
Theorems \ref{thm:consistency-nw} and \ref{thm:consistency-nn} in Appendix \ref{sec:unif-cons-estim} guarantee the point-wise consistency of the cost estimates for a fixed decision for Nadaraya-Watson and nearest neighbors learning, respectively. Point-wise consistency of cost estimates crucially does not immediately establish consistency of the associated budget minimization formulations, i.e.,
\begin{equation}
  \label{eq:consistency-nominal}
  \lim_{n\to\infty}\E{D^\star}{L(z_{\data[n]}, y)|x=\obs} = \min_{z}\,\E{D^\star}{L(z, y)|x=\obs}.
\end{equation}
For the latter uniform convergence of the cost estimates needs to be shown. \citet{bertsimas2014predictive} do so under rather mild technical assumptions. Indeed, it suffices to assume that the family of loss functions $\{L(\cdot, \bar y)\}_{\bar y}$ is equicontinuous. This equicontinuity assumption is rather mild as any family of functions with common Lipschitz constant is equicontinuous. Lemma 4 of \citet{bertsimas2014predictive} establishes that under equicontinuity if the cost estimates converge, they do so necessarily uniformly in the decision which in turn guarantees consistency of the associated budget minimization formulation; see also Appendix \ref{sec:unif-cons-estim}.

\subsection{Bootstrap Estimates via Convex Optimization}

We show here that estimation on bootstrap data can be posed as a convex optimization problem. The significance of this result will only become fully clear in Section \ref{sec:robust-prescriptive-analytics}. Recall that any bootstrap data set $\bootstrap[n]$ shares its observations with the training data set $\train[n]$ from which it is resampled modulo their frequency. In terms of its empirical distribution $D_{\bootstrap[n]}$ this translates to
\[
  D_{\bootstrap[n]} \in \mc D_{n,n}\defn\set{D}{\textstyle \sum_{(\bar x, \bar y)\in \Omega_n} D[\bar x, \bar y] =1, ~ n\cdot D[\bar x, \bar y]\in \{0,1,2,\dots,n\} \quad \forall (\bar x, \bar y) \in \Omega_n} \subset \mc D_n.
\]
We will have need for a characterization of our estimator $\Es{D}{L(\bar z, y)|x=\obs}$ defined in Equation \eqref{eq:balloon:nom} as an explicit function of bootstrap distributions $D$ in $\mc D_{n,n}$. This can be done by first associating a partial estimator to each of the  neighborhoods sets $N^j_n(\obs),~j\in[n]$ with the help of a linear optimization problem
\begin{equation}
  \label{eq:partial:estimator}
  \begin{array}{rl}
    \Esp{D}{L(\bar z, y)|x=\bar x} \defn  {\displaystyle\sup_{s>0, P}} & \sum_{(\bar x, \bar y)\in N^j_n(\obs)}\, w_n(\bar x, \obs)\cdot L(\bar z, \bar y)\cdot  P[\bar x, \bar y] \\[0.5em]
    \st  & P[\bar x, \bar y] \geq 0,~s\cdot D[\bar x, \bar y] = P[\bar x, \bar y] \quad \forall (\bar x, \bar y)\in \Omega_n,\\[0.5em]
                                                                  & \sum_{(\bar x, \bar y)\in\Omega_n} P[\bar x, \bar y] = s,~ \sum_{(\bar x, \bar y)\in N^j_n(\obs)} w_n(\bar x, \obs) \cdot P[\bar x, \bar y] = 1,\\[0.5em]
                                                                  & \sum_{(\bar x, \bar y)\in N^j_{n}(\obs)} P[\bar x, \bar y] \geq \frac{k}{n}\cdot s, ~\sum_{N^{j\!-\!1}_{n}\! (\obs)} P[\bar x, \bar y] \leq \frac{k-1}{n} \cdot s.
  \end{array}
\end{equation}
Here an optimization variable $P[\bar x, \bar y]$ for each distinct observed data point $(\bar x, \bar y)$ in the training data $\train[n]$ is introduced together with an additional variable $s$. Note that the domain of the partial estimators as a function of the distribution $D$ satisfies
\begin{equation}
  \label{eq:domain}
  \begin{array}{rl}
    & \dom\,\Esp{D}{L(z, y)|x=\bar x} \\[0.5em]
    \subseteq & \mc D_{n}^j\defn \set{D\in\mc D_n\!\!}{\!\!\textstyle \sum_{N^{j\!-\!1}_{n}\! (\obs)} D[\bar x, \bar y] \leq \tfrac{(k-1)}{n} < \tfrac{k}{n} \leq \sum_{(\bar x, \bar y)\in N^j_{n}(\obs)} D[\bar x, \bar y]}.
  \end{array}
\end{equation}
For a distribution $D$ to be in the domain of the partial estimator the constraints in the optimization formulation given in equation \eqref{eq:partial:estimator} must indeed be feasible. In other words, there must exist some $P$ and $s>0$ for which $s\cdot D[\bar x, \bar y] = P[\bar x, \bar y]$ for all $(\bar x, \bar y)\in \Omega_n$. The last two constraints in equation \eqref{eq:partial:estimator} then imply that any such $D\in \mc D_n$ must also be in $\mc D^j_{n}$. We follow here the standard convention that the supremum over an empty set is unbounded from below. Our estimator can now be decomposed as the maximum of each of the partial estimators previously defined. We defer the proof of the following result to Appendix \ref{ssec:proof-opt-char}.

\begin{theorem}[An Equivalent Optimization Characterization]
  \label{thm:nominal-budget}
  The estimator defined in \eqref{eq:balloon:nom} can be characterized as
  \[
    \begin{array}{rl}
      \Es{D}{L(z, y)|x=\obs} & = \left\{
                               \begin{array}{ll}
                                 {\rm{E}}^{n, 1}_D[L(z, y)|x=\obs]  & \rm{if}~ D\in \mc D^1_n ,\\
                                 \vdots & \vdots \\
                                 {\rm{E}}^{n, n}_D[L(z, y)|x=\obs]  & \rm{if}~ D\in \mc D^n_n ,\\
                               \end{array}\right.\\[2.5em]
                             & = \max_{j\in [n]} \, \Esp{D}{L(z, y)|x=\obs}
    \end{array}
  \]
  for $D\in \mc D_{n,n}$.
\end{theorem}

\section{Distributionally Robust Prescriptions with Contextual Information}
\label{sec:robust-prescriptive-analytics}

In this paper we extend the distributionally robust optimization perspective to prescription problems with contextual information. We construct generic robust formulations as suggested in Equation (\ref{eq:dro}) with the help of a distribution distance function. In the next section we will then show that bootstrap robustness guarantees can be obtained by considering a very particular entropic distance function. However, we will concern ourselves in this section only with the  computational tractability of generic robust formulations.

\begin{definition}[Distribution Distance Function]
  \label{def:distance-function}
  A distribution distance function $R$ is a function quantifying the distance between two empirical distributions in $\mc D_n$ enjoying the following properties:
  \begin{enumerate}
  \item[(i)] \makebox[7.0em]{Discrimination:\hfill} $R(D, D')\geq 0$ for all $D$ and $D'$, while $R(D', D)= 0$ if and only if $D'=D$.
  \item[(ii)] \makebox[7.0em]{Convexity:\hfill} $R(D, D')$ is a convex function in $D$ for all fixed $D'$.
  \end{enumerate}
\end{definition}

We define first a generic robust counterpart following  Equation (\ref{eq:dro}) to a nominal formulation  with respect to the ambiguity set $\mc A(D_{\train[n]}) \defn \set{D\in \mc D_n}{R(D,  D_{\train[n]})\leq r}$ consisting of all empirical distributions at distance not exceeding $r$. 

\begin{definition}[Distributionally Robust Budget Formulations]
  \label{def:sla:robust}
  The distributionally robust counterpart to a estimate-now-optimize-later formulation with respect to the distribution distance function $R$ is defined  as
  \begin{equation}
    \label{eq:boostrap-robust}
    \begin{array}{rl}
      z^{\br}_{\train[n]}(\obs) \in \arg\min_z \, c_n(z, D_{\train[n]}, \obs) \defn \sup_D &  \Es{D}{L(z, Y)|x=\obs} \\[0.5em]
      \st & D \in \mc D_n, \\[0.5em]
          & R(D, D_{\train[n]})\leq r.
    \end{array}
  \end{equation}
\end{definition}

Due to the discrimination property of the distribution distance function, the nominal \acl{sla} is recovered when the robustness radius is zero. In that case we are indeed merely robust with respect to the singleton $\set{D}{R(D, D_{\train[n]})\leq 0}=\{D_{\train[n]}\}$.
Using a robust counterpart instead of nominal \aclp{sla} should protect us against making prescriptions which do well on the training data set but tend to disappoint on out-of-sample data. The robust training prescription $z^{\br}_{\train[n]}(\obs)$ indeed does well not on one particular empirical training distribution  $D_{\train[n]}$ but rather uniformly well on all distributions $\set{D}{R(D, D_{\train[n]})\leq r}$ at distance less than $r$. The particular distance function $R$ dictates which distributions are close to the empirical training distribution and consequently should be chosen with care. Many popular choices are discussed in detail by \citet{postek2016computationally}.

We have in Section \ref{sec:prescriptive-analytics} divided the training data into the nested neighborhoods $N^j_n(\obs),~j\in [n]$. Each of these neighborhoods sets contains those data points closest to the context of interest $x=\obs$. We associated with each of these neighborhoods partial estimators $\Esp{D}{L(z, y)|x=\obs}$. Theorem \ref{thm:nominal-budget} showed that our estimator $\Es{D}{L(z, Y)|x=\obs}$ can be characterized as the maximum of these partial estimators. We likewise first define partial robust estimators via the optimization problems
\begin{equation}
  \label{eq:nna:primal}
  \begin{array}{rl}
    c^j_{n}(z, D, \obs) \defn  {\displaystyle\sup_{s>0, P}} & \sum_{(\bar x, \bar y)\in N^j_n(\obs)}\, w_n(\bar x, \obs)\cdot L(z, \bar y)\cdot  P[\bar x, \bar y] \\[0.5em]
    \st  & s\cdot R({P}/{s}, D)\leq s\cdot r,\\[0.5em]
                                                                 & \sum_{(\bar x, \bar y)\in\Omega_n} P[\bar x, \bar y] = s,~ \sum_{(\bar x, \bar y)\in N^j_n(\obs)} w_n(\bar x, \obs) \cdot P[\bar x, \bar y] = 1,\\[0.5em]
                                                                 & \sum_{(\bar x, \bar y)\in N^j_{n}(\obs)} P[\bar x, \bar y] \geq s \cdot \tfrac{k}{n}, ~\sum_{(\bar x, \bar y)\in N^{j\!-\!1}_{n}\! (\obs)} P[\bar x, \bar y] \leq s \cdot \tfrac{(k-1)}{n}.
  \end{array}
\end{equation}

The previous maximization problems characterizing the partial robust estimators are all concave. Its first optimization variable $s$ is one dimensional, while an additional optimization variable $P(\bar x, \bar y)$ is introduced for each distinct training data point in the empirical support $\Omega_n$. Its first constraint is the only nonlinear one and is convex as the perspective function $s\cdot R({ P}/{s}, D)$ is convex jointly in both variables. In the proof of the theorem stated hereafter we show that
$$c^j_{n}(z, D, \obs) = \sup\, \{\Esp{D}{L(z, y)|x=\obs}:D\in \mc D_n, ~R(D, D_{\train[n]})\leq r\}$$
is indeed the robust counterpart of the partial estimators associated with the nested neighborhoods $N^j_n(\obs)$.

\begin{theorem}[Robust Estimation Formulation]
  \label{thm:nn-primal-representation}
  Our robust estimation formulation (\ref{eq:boostrap-robust}) can be reformulated as the convex optimization problem
  \begin{equation}
    \label{eq:nna}
    z^{\br}_{\train[n]}(\obs)\in\arg \min_z \,c_{n} (z, D_{\train[n]}, \obs)=\arg \min_z\max_{j\in[n]}\, c^{j}_{n} (z, D_{\train[n]}, \obs).
  \end{equation}
\end{theorem}

\begin{proof}
  The chain of equalities
  \[
    \begin{array}{r@{}l}
  \set{(s>0, P)}{\exists D\in \mc D_n ~\st~ s\cdot D= P,\,R(D, D_{\train[n]})\leq r} & = \set{(s>0, P)}{R(\tfrac{P}{s}, D_{\train[n]}) \leq r} \\[0.5em]
                                                                      &  =  \set{(s>0, P)}{s\cdot R(\tfrac{P}{s}, D_{\train[n]}) \leq s\cdot r}    \end{array}
  \]
  implies that the robust partial budget function $c^j_{n}(z, D_{\train[n]}, \obs)$ correspond exactly to the robust version $$\sup\, \{\Esp{D}{L(z, y)|x=\obs}:D\in \mc D_n, ~R(D, D_{\train[n]})\leq r\}$$ of the partial estimators. We have from Theorem \ref{thm:nominal-budget} the decomposition $c_{n} (z, D_{\train[n]}, \obs)\defn\textstyle\max_{j\in[n]}\, c^{j}_{n} (z, D_{\train[n]}, \obs)$. The final optimization formulation is convex as it consists of minimizing the maximum of the individually convex partial budget functions $c^{j}_{n} (z, D_{\train[n]}, \obs)$ as per standing Assumption \ref{ass:convex_cost_model}.
\end{proof}

The previous theorem guarantees the computational tractability of our robust estimation formulation and in particular entails as special cases both robust Nadaraya-Watson and nearest neighbors formulations. We conclude this section by stating that for the former formulation a less involved characterization can be obtained. We defer the proof of this result to Appendix \ref{ssec:corollary_nw_formulation}.

\begin{corollary}(Robust Nadaraya-Watson Formulation)
  \label{cor:nw_formulation}
  The robust Nadaraya-Watson learning formulation ($k(n)=n$) can be reformulated as the convex optimization problem
  \(
  z^{\br}_{\train[n]}(\obs)\in\arg \min_z \,c_{n} (z, D_{\train[n]}, \obs)
  \)
  with
  \[
    \begin{array}{rrl}
      c_{n}(z, D_{\train[n]}, \obs)= & {\displaystyle\sup_{s>0, P}} & \sum_{(\bar x, \bar y)\in \Omega_n}\, w_n(\bar x, \obs)\cdot L(z, \bar y)\cdot  P[\bar x, \bar y] \\[0.5em] 
       & \st  & s\cdot R({P}/{s}, D_{\train[n]})\leq s\cdot r,\\[0.5em]
       &      & \sum_{(\bar x, \bar y)\in\Omega_n} P[\bar x, \bar y] = s,~ \sum_{(\bar x, \bar y)\in \Omega_n} w_n(\bar x, \obs) \cdot P[\bar x, \bar y] = 1.
    \end{array}
  \]
\end{corollary}

\begin{remark}
  \label{rem:hanasusanto}
  Robust Nadaraya-Watson formulations have been proposed before, most notably by \citet{hanasusanto2013robust} in the context of robust dynamic programming. We briefly take here the opportunity to point out that our robust Nadaraya-Watson formulation is different. The robust Nadaraya-Watson formulations found in \citet{hanasusanto2013robust} correspond directly to the more restricted alternative
  \[
    \begin{array}{rl}
      c'_{n}(z, D_{\train[n]}, \obs)\defn  {\displaystyle \sup_D} \,& \tfrac{\sum_{(\bar x, \bar y)\in \Omega_n} w_{n}(\bar x, \obs)\cdot L(z, \bar y)\cdot D[\bar x, \bar y]}{\sum_{(\bar x, \bar y)\in \Omega_n} w_{n}(\bar x, \obs)\cdot D[\bar x, \bar y]}\\[0.5em]
      \st         & R(D, D_{\train[n]})\leq  r, \\[0.5em]
                                                                    &  \sum_{(\bar x, \bar y)\in \Omega_n} w_{n}(\bar x, \obs)\cdot D[\bar x, \bar y] = \sum_{(\bar x, \bar y)\in \Omega_n} w_{n}(\bar x, \obs)\cdot D_{\train[n]}[\bar x, \bar y].
    \end{array}
  \]
  In \citet{hanasusanto2013robust} one particular convex distribution distance function $R$ based on a scaled version of the Pearson distance is singled out. In terms of the convex reformulation given in Corollary \ref{cor:nw_formulation}, this alternative can be seen to correspond to simply restricting the optimization variable $s>0$ to take the particular value $s=\tfrac{1}{\sum_{(\bar x, \bar y)\in \Omega_n} w_{n}(\bar x, \obs)\cdot D_{\train[n]}[\bar x, \bar y]}$. Even in the particular context of the Nadaraya-Watson formulation, our notion of distributional robustness hence seems to be novel.
\end{remark}

\section{Bootstrap Robust Prescriptions with Contextual Information}
\label{sec:bootstrap-performance}

In this section we indicate that for a judiciously chosen distribution distance function our robust prescriptive formulations can be guaranteed to be bootstrap robustness as defined in Definition \ref{def:bootstrap-robust}.

\begin{definition}[The Bootstrap Distance Function]
  \label{def:kl-divergence}
  For two empirical distributions $D$ and $D'$ in $\mc D_n$ we define their bootstrap distance as
  \begin{equation}
    \label{eq:bootstrap-function}
    \B{D}{D'} \defn \textstyle\sum_{(\bar x, \bar y)\in\Omega_n} D[\bar x, \bar y]\cdot \log\left(\frac{D\,\,[\bar x, \bar y]}{D'[\bar x, \bar y]}\right).
  \end{equation}
\end{definition}

The relative entropy is also known as information for discrimination, cross-entropy, information gain or Kullback-Leibler divergence \citep{kullback1951information}. We first prove that its associated robust estimation formulations are statistically consistent under mild conditions.

\subsection{Statistical Consistency}
\label{ssec:consistency_robust}

The bootstrap robust Nadaraya-Watson formulation will be consistent for bounded loss functions when the robustness radius $r(n)$ is appropriately scaled with respect to the bandwidth parameter $h(n)$:

\begin{theorem}[Bootstrap Robust Nadaraya-Watson Formulation]
  \label{thm:consitencty-robust-nw}
  Assume a bounded loss function $L(\bar z, \bar y)<\bar L < \infty$ for all feasible decisions $\bar z$ and parameters $\bar y$. Let $h(n)={cn^{-\delta}}$ for some $c>0$ and $\delta\in(0, \tfrac{1}{\dim(x)})$. Let $S$ be any of the smoother functions listed in Figure \ref{fig:kernels} and the weighing function is taken to be $w_n(\bar x, \obs) = S(\norm{\bar x-\obs}_2/h(n))$. Let the robustness radius satisfy $\lim_{n\to\infty} \tfrac{\sqrt{r(n)}}{h(n)^{\dim(x)}}=0$ for distribution distance function $R=B$. Then, bootstrap robust estimation (with $k(n)=n$) is asymptotically consistent for any $D^\star$, i.e., with probability one
  \[
    \lim_{n\to\infty} ~\E{D^\star}{L(z_{\data[n]}^{r(n)}, y)|x=\obs} = \min_z \,\E{D^\star}{L(z, y)|x=\obs}.
  \]
\end{theorem}

We defer the proof of the previous result to Appendix \ref{ssec:consitencty-robust-nw}. A similar result established in Appendix \ref{ssec:consitencty-robust-nn} holds for the bootstrap robust nearest neighbors formulation when the robustness radius $r(n)$ is scaled appropriately as a function of the number of nearest neighbors $k(n)$:

\begin{theorem}[Bootstrap Robust Nearest Neighbors Formulation]
  \label{thm:consitencty-robust-nn}
  Assume a bounded loss function $L(z, \bar y)<\bar L<\infty $ for all feasible decisions $\bar z$ and parameters $\bar y$. Let $\dist(\bar d=(\bar x, \bar y), \obs) = \norm{\bar x-\obs}_2$ and follow the randomized tie breaking rule discussed in \citet{gyorfi2006distribution}. Let $k(n)=\ceil{\min\{cn^{\delta}, n\}}$ for some $c>0$ and $\delta\in(0,1)$. Let the robustness radius satisfy $\lim_{n\to\infty} \tfrac{n\sqrt{r(n)}}{k(n)}=0$ for distribution distance function $R=B$. Then, bootstrap robust estimation ($w_n(\bar x, \obs)=1$) is asymptotically consistent for any $D^\star$, i.e., with probability one
  \[
    \lim_{n\to\infty} ~\E{D^\star}{L(z_{\data[n]}^{r(n)}, y)|x=\obs} = \min_z \,\E{D^\star}{L(z, y)|x=\obs}.
  \]
\end{theorem}

\subsection{Bootstrap Performance}
\label{ssec:bootstrap_performance}

We will now see that a particular robustness radius is advisable if a certain bootstrap performance is required. To establish that besides consistency, the bootstrap robust estimation formulation suffers only a limited bootstrap disappointment, we will need one elementary result from large deviation theory. The following theorem characterizes the essential large deviation behavior of the empirical distribution $D_{\bootstrap[n]}$ of the bootstrap data resampled from the training data as outlined in Equation~(\ref{data:bootstrapped}).

\begin{figure}
  \centering
  \includegraphics[width=0.45\textwidth]{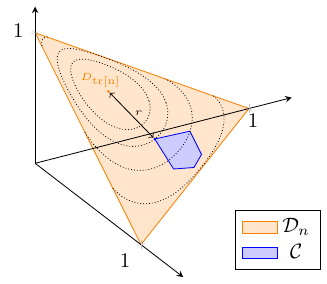}
  \caption{Visualization of the bootstrap inequality \eqref{eq:ldb} stated in Theorem \ref{thm:ldt}. The probability $ D^\infty_{\train[n]}({D}_{\bootstrap[n]}\in \mc C)$ decays with increasing number of samples $n$ at the exponential rate $r\defn\inf_{D\in \mc C}\, \B{D}{D_{\train[n]}}$, which can be interpreted as the bootstrap distance of the empirical training distribution $D_{\train[n]}$ to the set of interest $\mc C$.}
  \label{fig:simplex_ldt}
\end{figure}

\begin{theorem}[The Bootstrap Inequality {\citep[Theorem 1]{csiszar1984sanov}}]
  \label{thm:ldt}
  The probability that the random bootstrap distribution $D_{\bs[n]}$  realizes in any convex set $\mc C \subseteq \mc D_n$ satisfies the finite sample inequality
  \begin{equation}
    \label{eq:ldb}
    D_{\train[n]}^\infty\left[ D_{\bootstrap[n]} \in \mc C \right] \leq \exp\left(- n \cdot \textstyle\inf_{D\in \mc C} \, B(D, D_{\train[n]})\right), \quad \forall n \geq 1.
  \end{equation}
\end{theorem}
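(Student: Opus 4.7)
The plan is to use a change-of-measure trick anchored at the information projection of $\mb M_{n, t}$ onto $\mc C$, combined with a Pythagorean-style inequality for relative entropy that exploits convexity of $\mc C$. This refinement of the classical method-of-types argument is what lets the bound avoid any polynomial prefactor in $n$.

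First I would restrict attention to those empirical types of denominator $n$ that lie in $\mc C$, since only these carry positive probability under the bootstrap law $\mb M^n_{n,t}(\mb M_{n, b} = \cdot)$. Let $\mb M^\star \in \arg\min_{\mb M \in \mc C}\, B(\mb M, \mb M_{n, t})$ denote the information projection, whose existence follows from lower semi-continuity of $B(\cdot, \mb M_{n, t})$ on the compact probability simplex over $\ms M_n$; if the infimum happens not to be attained (for instance when $\mc C$ fails to be closed) the argument runs through a minimizing sequence and a limiting step at the end.

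Next, for any empirical type $\mb M$ supported in $\ms M_n$, a direct multinomial calculation yields the likelihood-ratio identity
\begin{equation*}
\frac{\mb M_{n, t}^n(\mb M_{n, b} = \mb M)}{\mb M^{\star n}(\mb M_{n, b} = \mb M)} = \exp\bigl(n[B(\mb M, \mb M^\star) - B(\mb M, \mb M_{n, t})]\bigr),
\end{equation*}
obtained by canceling the common multinomial coefficient and collecting logarithms of the product measures. Convexity of $\mc C$ together with the first-order optimality of $\mb M^\star$ then furnishes the Csisz\'ar-Pythagorean inequality $B(\mb M, \mb M_{n, t}) \geq B(\mb M, \mb M^\star) + B(\mb M^\star, \mb M_{n, t})$ for every $\mb M \in \mc C$, proved by considering the perturbation $\mb M_\epsilon = (1-\epsilon)\mb M^\star + \epsilon \mb M \in \mc C$ and demanding that the right derivative of $\epsilon \mapsto B(\mb M_\epsilon, \mb M_{n, t})$ at $\epsilon = 0$ be nonnegative. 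This exponent inequality is the only place where convexity of $\mc C$ is used, and rigorously justifying the termwise differentiation (especially near coordinates where $\mb M^\star$ vanishes) is what I expect to be the main technical obstacle.

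Combining the two displays gives the uniform-in-type bound $\mb M_{n, t}^n(\mb M_{n, b} = \mb M) \leq \exp(-n\, B(\mb M^\star, \mb M_{n, t})) \cdot \mb M^{\star n}(\mb M_{n, b} = \mb M)$ for every empirical type $\mb M \in \mc C$. Summing this over the finite collection of such types and invoking the trivial probability bound $\mb M^{\star n}(\mb M_{n, b} \in \mc C) \leq 1$ then delivers
\begin{equation*}
\mb M_{n, t}^n(\mb M_{n, b} \in \mc C) \leq \exp\bigl(-n\, B(\mb M^\star, \mb M_{n, t})\bigr) = \exp\Bigl(-n \inf_{\mb M \in \mc C} B(\mb M, \mb M_{n, t})\Bigr),
\end{equation*}
which is precisely the claimed inequality \eqref{eq:ldb}.
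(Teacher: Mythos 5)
The paper does not prove this statement itself; it imports it verbatim as Theorem 1 of \citet{csiszar1984sanov}. Your argument is precisely the standard information-projection proof from that source: the multinomial likelihood-ratio identity $\mb M_{n,t}^n(\mb M_{n,b}=\mb M)/\mb M^{\star n}(\mb M_{n,b}=\mb M)=\exp\bigl(n[B(\mb M,\mb M^\star)-B(\mb M,\mb M_{n,t})]\bigr)$, the Csisz\'ar--Pythagorean inequality obtained from convexity of $\mc C$ via the perturbation $(1-\epsilon)\mb M^\star+\epsilon\mb M$, and the summation over types with $\mb M^{\star n}(\mb M_{n,b}\in\mc C)\leq 1$ -- and it is correct, including your identification of the only delicate point (coordinates where $\mb M^\star$ vanishes, which is resolved because every type in $\mc C$ with finite divergence from $\mb M_{n,t}$ must be absolutely continuous with respect to the I-projection).
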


The geometry of the bootstrap inequality is visualized in Figure \ref{fig:simplex_ldt}.

Notice that for the optimization problem defining the partial budget functions $c^{j}_{n}$ to be nontrivial for the empirical training distribution $D_{\train[n]}$, the robustness radius $r$ defining our ambiguity set needs to be bigger than the minimum robustness radius
\begin{equation}
  \label{eq:nn-radius}
  \begin{array}{rl}
    r^{j}_n \defn \inf & R(D, D_{\train[n]}) \\[0.5em]
    \st & D \in \mc D^{j}_n
  \end{array}
\end{equation}
where $\mc D^j_n$ was defined in \eqref{eq:domain}.
If this is the case, then the feasible set of the optimization problem \eqref{eq:nna:primal} defining the partial budget cost $c^{j}_{n}$ is indeed non-empty. The extremal bootstrap radii are characterized too as the solution of a tractable convex optimization problem \eqref{eq:nn-radius}. These extremal bootstrap radii will come to  play an important role in the characterization of the bootstrap disappointment suffered by our robust formulation as stated by the following theorem. 

\begin{theorem}[Bootstrap Performance of Robust Formulations]
  \label{thm:performance:nna}
  Our robust formulation with bootstrap distance function ($R=B$) and robustness radius $r$ suffers a bootstrap disappointment defined in \eqref{eq:bootstrap-confidence-sla} at most $$b(n) = \textstyle\sum_{j\in [n]}\exp{(-n\cdot \max \{r, r^j_{n} \})}.$$
\end{theorem}
\begin{proof}
  Let us fix a training data set with empirical distribution $D_{\train[n]}$ and consider a given decision $\bar z$. Let $\bar c^j_{n} \defn \Esp{D_{\train[n]}}{L(\bar z, y)|x=\obs}$ with $\bar c_{n} = \max_{j\in[n]}\bar c^j_{n}$ be the budgeted cost based on the training data. In order to prove the theorem, it suffices to characterize the probability of the event that the empirical distribution $D_{\bootstrap[n]}$ of random bootstrap data resampled from the training data realizes in the set
  \(
  \mc C = \set{D \in \mc D_n}{c_n(\bar z, D, \obs) > \bar c_n} = \cup_{j\in [n]}\,\mc C_j
  \)
  with
  \begin{equation*}
    \begin{array}{rl}
      \mc C_j \defn & \set{D\in\mc D_n^j}{\Esp{D}{L(\bar z, y)|x=\obs}>\bar c_n}\\[1em]
      = & \{D \in \mc D^j_n : \textstyle \sum_{(\bar x, \bar y)\in N^j_n(\obs)} w_n(\bar x, \obs)\cdot L(\bar z, \bar y) \cdot D[\bar x, \bar y] >  \bar c_n  \cdot \sum_{(\bar x, \bar y)\in N_n^j(\obs)} w_n(\bar x,\obs) \cdot D[\bar x, \bar y]\}.
    \end{array}
  \end{equation*}
  Each of the partial sets $\mc C_j$ is a convex polyhedron. We can use the union bound to establish $$D_{\train[n]}^\infty(D_{\bootstrap[n]} \in \mc C) \leq \textstyle\sum_{j\in[n]} D_{\train[n]}^n(D_{\bootstrap[n]} \in \mc C_j).$$ 

The robust budget cost $\bar c_n$ is constructed precisely as to ensure that $\inf_{D\in \mc C_j}\, R(D, D_{\train[n]}) > r$. Indeed, we have the implication $\bar{D} \in \mc C_j \!\implies\! \Esp{\bar D}{L(\bar z, y)|x=\obs} > \bar c_n \geq \sup \,\{ \Esp{D}{L(\bar z, y)|x=\obs} \,|\, R(D, D_{\train[n]})\leq r,\,D\in \mc D_n \}$ which in turn itself implies $R(\bar D, D_{\train[n]}) > r$. By virtue of the inclusion $\mc C_j \subseteq \mc D_n^j$, evidently, we must also have that $\inf_{D\in \mc C_j}\, R(D, D_{\train[n]}) > r^j_{n} \defn \inf \{ R(D, D_{\train[n]}) : D \in \mc D_n^j \}$. Hence, the result follows from the bootstrap inequality \eqref{eq:ldb} applied to each of the probabilities $D_{\train[n]}^\infty(D_{\bootstrap[n]} \in \mc C_j)$ as in this particular case the employed distribution distance function ($R=B$) coincides with the bootstrap distance function.
\end{proof}

The previous theorem guarantees a minimum exponential decay rate for the bootstrap disappointment of any bootstrap robust counterpart formulation, i.e.,
\begin{equation}
  \label{eq:asymptotic_guarantee}
  \limsup_{n\to\infty}\frac{1}{n} \log b(n) \leq -r.
\end{equation}
The bootstrap inequality (\ref{eq:ldb}) is asymptotically exact in the exponential rate. Sanov's Theorem, c.f., \citet[Theorem 6.2.10]{dembo2009large}, indeed establishes that
\begin{equation}
  - \inf_{D \in \interior \mc C} \, B(D, D_{\train}) \leq \liminf_{n\to\infty} \frac{1}{n} \log D_{\train}^\infty\left[ D_{\bootstrap[n]} \in \mc C \right]
  \label{eq:exponential-tightness}
\end{equation}
for any event set $\mc C$ and distribution $D_{\train}$.
Via this lower bound, our bootstrap robust distance function can be shown to the smallest distance function for the guarantee in Equation \eqref{eq:asymptotic_guarantee}, and hence by extension also Theorem \ref{thm:performance:nna}, holds.
The following proposition states indeed that for any other distance function $R$ which is larger than $B$ on some nonempty open set, the guarantee presented Equation \eqref{eq:asymptotic_guarantee} fails to hold in general. The proof of the following result is deferred to Appendix \ref{ssec:proposition-bootstrap-smallest}.

\begin{proposition}
  \label{prop:bootstr-perf-optimal}
  Consider a distance function $R$ with $R(D', D_{\train}) > B(D, D_{\train})=r$ for some $D, D_{\train}\in \mc D_n$ and all $D'$ in some open neighborhood $\mc N\subseteq \mc D_n$ around $D\in \mc N$.
  There exists a nominal formulation so that its robust counterpart given in Equation (\ref{eq:boostrap-robust}) and disappointment set $\mc R=\tset{D\in \mc D_n}{ {\rm E}^n_{D}[L(z^{\br}_{\train}(\obs), y)|x=\obs] > \sup_{R(D', D_{\train})\leq r} {\rm E}^n_{D'}[L(z^{\br}_{\train}(\obs), y)|x=\obs] }$ satisfy
  \[
    -r < \liminf_{n\to\infty} \frac{1}{n} \log D_{\train}^\infty\left[ D_{\bootstrap[n]} \in \mc R \right].
  \]
\end{proposition}

The previous result indicates that our bootstrap distance function $B$ is in some precise sense optimal in safeguarding any nominal formulation against overfitting on bootstrap data.
Choosing the robustness radius $r(n)$ yielding a desired bootstrap disappointment $b$ cannot however be done analytically. However, thanks to the convex characterization \eqref{eq:nn-radius} of the minimum bootstrap radii $r^j_{n}$ it can nevertheless be carried out numerically in a tractable fashion. It is also trivial to see that adding robustness to the extent $r(n) \geq \tfrac{\left(\log(n)+\log(\tfrac{1}{b})\right)}{n}$ suffices to have bootstrap disappointment at most $b$. When we scale the robustness radius in such a way that
\[
  \lim_{n\to\infty} \frac{r(n) \cdot n}{\log(n)} = \infty
\]
then the disappointment on bootstrap data asymptotically converges to zero when the number of training data points $n$ tends to infinity. To be asymptotically consistent the robust nearest neighbors formulation needs to satisfy
\(
\lim_{n\to\infty} \tfrac{n\sqrt{r(n)}}{k(n)} = 0
\)
as pointed out in Theorem \ref{thm:consitencty-robust-nn}.
Asymptotically vanishing disappointment on bootstrap data can be combined with consistency by taking a number of nearest neighbors $k(n)=\ceil{\min\{cn^{\delta}, n\}}$ for some $c>0$ and $\delta\in(0,1)$ while at the same time scaling the robustness radius as $r(n) = t n^\gamma$ with $-1<\gamma<2 (\delta-1)$ for any $t>0$. For the Nadaraya-Watson formulation a slightly improved result can  be established and is stated in Corollary \ref{thm:bootstrap-nw}. We omit its proof and refer the reader to Appendix \ref{ssec:proof-corollary}.

\begin{corollary}[Bootstrap Performance of the Nadaraya-Watson Formulation]
  \label{thm:bootstrap-nw}
  The robust formulation (with $k(n)=n$) with bootstrap distance function ($R=B$) suffers bootstrap disappointment as defined in \eqref{eq:bootstrap-confidence-sla} at most $$b(n) = \exp{(-n\cdot r)}.$$
\end{corollary}

Adding robustness to the extent $r(n) \geq \tfrac{\log(\tfrac  1b)}{n}$ already suffices here to have bootstrap disappointment at most $b$. When we scale the robustness radius in such a way now that
\(
  \lim_{n\to\infty} r(n) \cdot n = \infty
\)
then the disappointment on bootstrap data asymptotically converges to zero when the number of training data points $n$ tends to infinity. To be asymptotically consistent the robust Nadaraya-Watson formulation needs to satisfy
\(
\lim_{n\to\infty} \tfrac{\sqrt{r(n)}}{h(n)^{\dim(x)}} = 0
\)
as pointed out in Theorem \ref{thm:consitencty-robust-nw}. Asymptotically vanishing disappointment on bootstrap data can also here be combined with consistency by scaling the bandwidth parameter $h(n)=cn^{\delta}$ for some $c>0$ and $\delta\in(0,\tfrac{1}{\dim(x)})$ while at the same time scaling the robustness radius as $r(n) = t n^\gamma$ with $-1<\gamma<2 \delta\cdot\dim(x)$ for any $t>0$.

\subsection{Dual Formulations}
\label{ssec:dual_perspectives}

Despite the previous encouraging results regarding the bootstrap performance and consistency of our robust estimation formulation, it is still stated as the solution to a saddle point problem in Equation \eqref{eq:nna}. Both the size and the number of the maximization problems constituting the bootstrap robust formulation grow linearly with the amount of training data samples. The following lemma alleviates one of these concerns by considering a dual formulation of the maximization problems characterizing the partial robust cost functions. We defer its proof to Appendix \ref{ssec:proof-dual}.

\begin{lemma}[Dual representation estimation]
  \label{lemma:nn-dual-representation}
 The partial bootstrap robust ($R=B$) budget $c^j_{n}(\bar z, D, \obs)$ can be represented using a dual convex optimization problem as
  \begin{equation}
    \label{eq:nn:dual}
    \begin{array}{rl}
      \inf  & \alpha \\[0.5em]
      \st   & \alpha\in\Re,~\eta \in\Re_+^2, ~\nu\in\Re_+, \\[0.5em]
            & \nu \log \left(\textstyle\sum_{(\bar x, \bar y)\in N^{j-1}_n(\obs)} \exp({[(L(\bar z, \bar y)-\alpha)\cdot w_n(\bar x,\obs) + \eta_1-\eta_2]}/{\nu})\cdot D[\bar x, \bar y] \right. \\[0.5em]
            & \quad \left. + \sum_{(\bar x, \bar y)\in N^j_n(\obs)\setminus N^{j-1}_n(\obs)} \exp({[(L(\bar z, \bar y)-\alpha)\cdot w_n(\bar x,\obs) + \eta_1]}/{\nu})\cdot D[\bar x, \bar y] \right. \\[0.5em]
            & \quad \left. +  \sum_{\Omega_n\setminus N^j_n(\obs)} D[\bar x, \bar y] \right) + r \cdot \nu - \frac{k}{n}(\eta_1-\eta_2) - \frac{\eta_2}{n}  \leq 0.
    \end{array}
  \end{equation}
  when the robustness radius satisfies $r>r^j_{ n}$ for all $D\in\mc D_n$.
\end{lemma}

The main advantage of using the previous convex dual formulation of the robust budget functions is that finding the optimal prescription $z^{r}_{\train[n]}(\obs)$ now merely requires the solution of a convex optimization problem over the decision $z$ and three additional dual variables $\alpha$, $\beta$, and $\eta$ for each $j\in [n]$. The dependence on the amount of training data is not completely eliminated as the constraint in the dual characterization \eqref{eq:nn:dual} of the partial robust budget $c^j_{n}$ still counts $j$ terms. As the robust nearest neighbors cost function $c_{n}$ consists of the maximum of all of these partial robust cost functions we still have to account for a total number of $O(n^2)$ such terms.

For the particular case of a bootstrap robust nearest neighbors formulation, Lemma \ref{lemma:nn-dual-representation} and Equation \eqref{eq:nna} guarantee the following convex representation.
\begin{mybox}{Example: Bootstrap Robust Nearest-Neighbors Formulation}
  \begin{equation*}
    \begin{array}{r@{\hspace{0.5em}}l}
      z^r_{\train[n]}(\obs)\in \arg\min &\max_{j\in [n]} \alpha_j \\[0.5em]
      \st & z\in \Re^{\dim(z)}, ~\alpha_j\in\Re,~\eta_j \in\Re_+^2, ~\nu_j\in\Re_+ ~~ \forall j\in [n],\\[0.5em]
       & \nu_j \log \left(\textstyle\sum_{(\bar x, \bar y)\in N^{j-1}_n(\obs)} \exp({[L(z,\bar  y)-\alpha_j + \eta_{j, 1}-\eta_{j, 2}]}/{\nu_j})\cdot D_{\train[n]}[\bar x, \bar y] \right. \\[0.5em]
            & \quad \left. + \sum_{(\bar x, \bar y)\in N^j_n(\obs)\setminus N^{j-1}_n(\obs)} \exp({[L( z, \bar y)-\alpha_j + \eta_{j,1}]}/{\nu_j})\cdot D_{\train[n]}[\bar x, \bar y] \right. \\[0.5em]
                                          & \quad \left. +  \sum_{\Omega_n\setminus N^j_n(\obs)} D_{\train[n]}[\bar x, \bar y] \right) + r \cdot \nu_j - \frac{k}{n}(\eta_{j,1}-\eta_{j,2}) - \frac{\eta_{j,2}}{n}  \leq 0 \quad \forall j\in [n].
    \end{array}
  \end{equation*}
\end{mybox}
For the Nadaraya-Watson formulation a more compact optimization formulation can be derived. We defer the proof of the next result to Appendix \ref{ssec:proof-dual-nw}.

\begin{lemma}[Dual Representation Nadaraya-Watson Estimation]
  \label{lemma:nw-dual-representation}
  The bootstrap robust ($R=B$) cost $c_{n}(\bar z, D, \obs)$ for $k(n)=n$ can be represented using a dual convex optimization problem as
  \begin{equation}
    \label{eq:nw:dual}
    \begin{array}{rl}
      \inf  & \alpha \\[0.5em]
      \st   & \alpha\in\Re, ~\nu\in\Re_+, \\[0.5em]
            & \nu \cdot \log \left(\textstyle\sum_{(\bar x, \bar y)\in \Omega_n} \exp\left({(L(\bar z, \bar y)-\alpha)\cdot w_n(\bar x,\obs)}/{\nu}\right)\cdot D[\bar x, \bar y]\right) + r \cdot \nu\leq 0.
    \end{array}
  \end{equation}
  for all $D\in\mc D_n$.
\end{lemma}

For the particular case of a bootstrap robust Nadaraya-Watson formulation, Lemma \ref{lemma:nw-dual-representation} and Equation \eqref{eq:nna} guarantee the following simplified convex representation.

\begin{mybox}{Example: Bootstrap Robust Nadaraya-Watson Formulation}
  \begin{equation*}
    \begin{array}{r@{\hspace{1em}}l}
      z^r_{\train[n]}(\obs)\in \arg\min & \alpha \\[0.5em]
      \st & z\in \Re^{\dim(z)}, ~\alpha\in\Re, ~\nu\in\Re_+, \\[0.5em]
            & \nu \cdot \log \left(\textstyle\sum_{(\bar x, \bar y)\in \Omega_n} \exp\left({(L( z, \bar y)-\alpha)\cdot w_n(\bar x,\obs)}/{\nu}\right)\cdot D_{\train[n]}[\bar x, \bar y]\right) + r \cdot \nu\leq 0.
    \end{array}
  \end{equation*}
\end{mybox}

\section{Numerical Example}
\label{sec:numerical-examples}

We first briefly discuss how our supervised learning formulations were solved and trained in practice. All algorithms were implemented in \texttt{Julia} \citep{bezanson2017julia}.
The nominal Nadaraya-Watson and nearest neighbors formulations of \citet{hannah2010nonparametric,rudin2014big,bertsimas2014predictive} were implemented with the help of the \texttt{Convex.jl} package developed by \citet{udell2014convex}. Taking advantage of the dual representations given in Lemmas \ref{lemma:nw-dual-representation} and \ref{lemma:nn-dual-representation}, the same procedure was followed for their robust counterparts with respect to the bootstrap distance function as well. The corresponding exponential cone optimization problems were solved numerically with the \texttt{ECOS} interior point solver by \citet{domahidi2013ECOS}.

Both the Nadaraya-Watson and nearest neighbors formulations require several hyper parameters such as the smoother function $S$ or the number of neighbors to be learned from data. We considered a Nadaraya-Watson formulation using the Gaussian smoother function given in Figure \ref{fig:kernels}. Likewise, we considered the classical nearest neighbors formulation with the Mahalanobis distance metric $$d((\bar x, \bar y), \obs) = (\bar x-\obs)\tpose \Sigma^{-1}_{\train[n]} (\bar x-\obs)$$ based on the empirical variance and the empirical mean of the covariate data. Potential ties among equidistant points were broken based on the method discussed by \citet{gyorfi2006distribution}. The bandwidth parameter $h(n)$ and the number of nearest neighbors $k(n)$ were determined based on the squared prediction loss performance of the corresponding Nadaraya-Watson or nearest neighbors predictive learner on ten data sets cross validated from the training data.

\subsection{A Newsvendor Problem}
\label{ssec:news-vendor}

We reconsider the newsvendor problem stated in Equation (\ref{prob:news-vendor}). To illustrate the approach we assume that the newsvendor considers both the day of the week $w\in\{\mathrm{Monday}, \dots, \mathrm{Sunday}\}$ and the outside temperature $t \in \Re$ as features which potentially influence customer demand.
We consider synthetic training data drawn as independent samples drawn from the conditional distribution
\[
   D^\star(\obs = ( t_0, w_0)) = N\!\left(100+(t_0-20) + 20 \cdot \mb I(w_0\in\{\mathrm{Weekend}\}), 16\right)
\]
and where the day of the week and outside temperature are independent random variables distributed uniformly and normally as $N(20, 4)$, respectively. 
We shall use this synthetic data newsvendor problem to illustrate the bootstrap disappointment of the robust Nadaraya-Watson and nearest neighbors formulations in a particular context of interest, i.e., $\obs = (t_0, w_0) = (10 ^\circ\mathrm{C},\,\mathrm{Friday})$. We remark that \citet{rudin2014big} discuss a Nadaraya-Watson learning formulation of a similar data-driven newsvendor problem which they denote as a kernel optimization formulation.

We first investigate to what extent our bootstrap robust formulations guard against disappointment on both synthetic bootstrap data and actual out-of-sample validation data. Given an action $z_{\train[n]}^r$ calibrated to the training data set and its budgeted cost estimate $c_n(z_{\train[n]}^r, D_{\train[n]}, \obs)$, we approximate its bootstrap and out-of-sample validation disappointment as suggested in Equations \eqref{eq:bootstrap-approx} and (\ref{eq:val-disappointment}) using a large number $\abs{B}=\abs{V}=20,000$ of resamples. In Figure \ref{fig:disappointment-in-sample}, we present these disappointment probabilities as a function of the number of training samples for the nominal and robust Nadaraya-Watson and nearest neighbors formulations. The nominal Nadaraya-Watson of \citet{hannah2010nonparametric,rudin2014big} and the nearest neighbors formulation by \citet{bertsimas2014predictive} corresponds to the cases depicted with $r=0$. Such nominal formulations do not safeguard against overfitting as they disappoint on both synthetic bootstrap data and actual out-of-sample data about half the time. The dotted lines visualize the upper bounds concerning the bootstrap disappointment of the bootstrap robust Nadaraya-Watson and nearest neighbors formulation given in Corollary \ref{thm:bootstrap-nw} and Theorem \ref{thm:performance:nna}, respectively. The guarantee in case of the nearest neighbors formulation is not as tight as its Nadaraya-Watson counterpart mostly due to the use of the union bound in the proof of Theorem \ref{thm:performance:nna}. Nevertheless, large deviation theory via Equation \eqref{eq:exponential-tightness} ensures that the empirical bootstrap disappointments and their corresponding theoretical upper bound in either formulation drop to zero at the same exponential rate $r$. Finally, the fact that the disappointment probability on actual out-of-sample data trends down at a similar rate with increasing number of samples as the disappointment on synthetic bootstrap data indicates that bootstrap data is here a sensible proxy to actual out-of-sample data.

\begin{figure}
  \centering
  \begin{subfigure}{0.45\textwidth}
    \includegraphics[width=\textwidth]{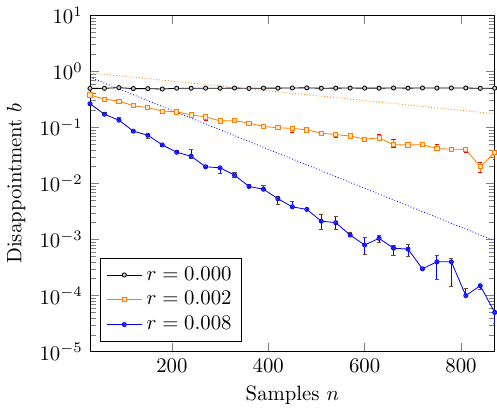}
    \caption{Nadaraya-Watson Formulations}
  \end{subfigure}%
  \begin{subfigure}{0.45\textwidth}
    \includegraphics[width=\textwidth]{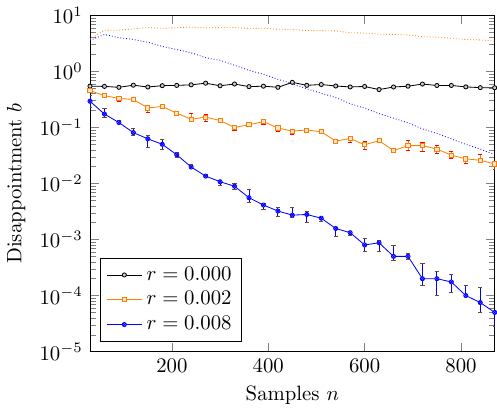}
    \caption{Nearest Neighbors Formulations}
  \end{subfigure}\\
  \begin{subfigure}{0.45\textwidth}
    \includegraphics[width=\textwidth]{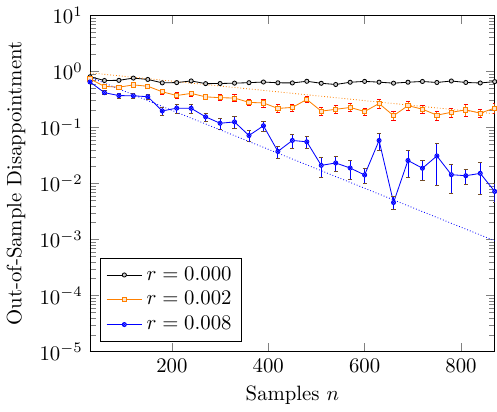}
    \caption{Nadaraya-Watson Formulations}
  \end{subfigure}%
  \begin{subfigure}{0.45\textwidth}
    \includegraphics[width=\textwidth]{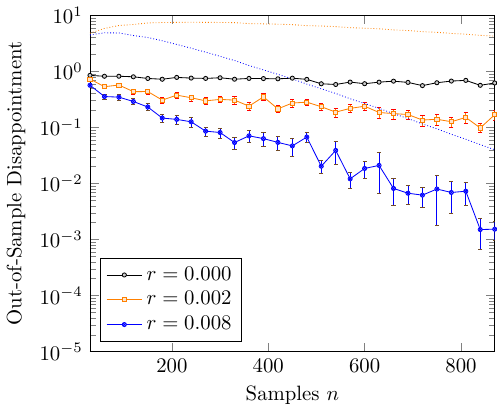}
    \caption{Nearest Neighbors Formulations}
  \end{subfigure}  
  \caption{The empirical bootstrap disappointment $b$ (first row) and the out-of-sample disappointment (second row) of the Nadaraya-Watson (first column) and nearest neighbors (second column) formulations in function of the number of samples $n$. The nominal Nadaraya-Watson and nearest neighbors formulation corresponds to the case $r=0$. Such nominal formulations do not safeguard against overfitting either on bootstrap or actual out-of-sample data as they disappoint on both about half the time. The dotted lines visualize the upper bounds concerning the bootstrap disappointment of the bootstrap robust Nadaraya-Watson and nearest neighbors formulation given in Theorem \ref{thm:bootstrap-nw} and Theorem \ref{thm:performance:nna}, respectively. Large deviation theory \citep{csiszar1984sanov} indicates that these bootstrap upper bounds and the actual bootstrap disappointments of either formulation drop to zero at the same exponential rate $r$. Finally, the fact that the disappointment on actual out-of-sample data trends down at a similar rate with increasing number of samples as the disappointment on synthetic bootstrap data indicates that bootstrap data is here a sensible proxy to actual out-of-sample data.}
  \label{fig:disappointment-in-sample}
\end{figure}

Working with synthetic data also gives us the opportunity to compare the robust and nominal formulations in terms of their actual expected cost and out-of-sample disappointment. Indeed, as the distribution $D^\star$ is known we can quantify the actual cost $\E{D^\star}{L(z_{\data[n]}, y)|x=\obs}$ of any proposed decisions $z_{\data[n]}$ analytically.  To guarantee a fair comparison, the robustness radius used here for the formulation proposed by \citet{hanasusanto2013robust} was chosen so that both formulations matched in terms of their in-sample estimated or budgeted cost $c_{n}(z_{\train[n]}(\obs), D_{\train[n]}, \obs)$ which hence is not reported. Proposition \ref{prop:bootstr-perf-optimal} indicates that our bootstrap distance function should be preferred when guarding formulations against disappointment on bootstrap data over any other distance function which as indicated in Remark \ref{rem:hanasusanto} includes the distance function considered by \citet{hanasusanto2013robust}. However, this advantage is rather small and both formulations enjoy a similar very performance as indicated in Table \ref{tab:hk} in which we report the actual cost $c_{\rm{oos}}$ and disappointment on both bootstrap data $b_{\rm{in}}$ and out-of-sample data $b_{\rm{oos}}$ aggregated over 250 independent training data sets. That being said, the main practical advantage of our bootstrap distance function is that its size $r$ can be calibrated in advance to guarantee a certain bootstrap disappointment (here $b=0.1$) via Theorem \ref{thm:performance:nna} (or Corollary \ref{thm:bootstrap-nw}) whereas the robustness radius of the distance function in the formulation \citet{hanasusanto2013robust} comes with no such interpretation and ordinarily would need to be calibrated using a potentially computationally expensive validation procedure. 

\begin{table}
  \centering
  \begin{tabular}{l@{\hspace{1.5em}}cccccc}
    \cline{1-7}
     & \multicolumn{3}{c}{\bfseries BR (b = 0.1)} &  \multicolumn{3}{c}{\bfseries HK (matched)}\\
    \cmidrule(r{5pt}){2-4}\cmidrule(l{5pt}r{0pt}){5-7} 
     & $b_{\rm{in}}$ & $b_{\rm{oos}}$ & $c_{\rm{oos}}$ & $b_{\rm{in}}$ & $b_{\rm{oos}}$ & $c_{\rm{oos}}$ \\
    \cline{1-7}
    $N= 100$ &  0.0209 & 0.0944 & 26.3145 & 0.0206 & 0.0948 & 26.524\\
    $N= 150$ &  0.0012 &  0.0068 & 21.0093 & 0.0010 & 0.0067 & 20.9988\\
    $N= 200$ &  0.0025 & 0.0126 & 17.2461 & 0.0023 & 0.0126  & 16.9906\\
    $N= 250$ &  0.0159 & 0.0261 & 14.3198 & 0.0142 & 0.0287 & 13.6155\\
    \cline{1-7}
  \end{tabular}
  \caption{Comparison between our bootstrap robust Nadaraya-Watson formulation (BR) and the robust formulation (HK) proposed by \citet{hanasusanto2013robust}.}
  \label{tab:hk}
\end{table}

We are also interested in seeing how each of the methods fares if we augment the covariates with a number $d$ of irrelevant spurious observations generated from independently sampling a standard normal distribution. It should not come as a surprise that when no spurious covariates are introduced the robust approaches in blue outperform their nominal counterparts in orange significantly as can be seen in Figure \ref{fig:dimension}. The reported cost is the average among $20,000$ random training sets each of length $n=200$. Here, the amount of robustification $r$ used in either formulation was determined based on ten fold cross validation. Given only $n=200$ training samples, both the robust Nadaraya-Watson and nearest neighbors formulation do indeed come close to the theoretically minimal cost $\min_z\, \E{D^\star}{L(z, y)|x=\obs}$ visualized as the black line for reference. As more spurious covariates are injected in the training data set, the performance of any data-driven method must evidently degrade. Ultimately as $d$ tends to infinity, the noise completely drowns out the signal. The performance of data-driven methods is expected to tend to the optimal cost $\min_z\, \E{D^\star}{L(z, y)}$ without using any covariate information and is visualized as the red line.

\begin{remark}
  It is perhaps curious to observe that the performance of the nominal Nadaraya-Watson and nearest neighbors decisions behave differently than their robust counterparts. Counter-intuitively, the performance of both nominal formulations initially improves with the introduction of spurious covariates. We observed empirically that when no spurious covariates are introduced the nominal cost estimates $\Es{D_{\train[n]}}{L(\bar z, y)|x=\obs}$ for all $\bar z$ on which the nominal formulations are based tend to overfit the training data. Consequently, the nominal actions suffer poor out-of-sample performance. Our robust formulations do a good job in alleviating this adverse phenomenon without much performance loss. The introduction of artificial noise in the data through spurious covariates seems to provide implicit protection against overfitting to the training data by preventing the selection, at least when using cross-validation, of a small bandwidth parameter $h(n)$ or a small number of neighbors $k(n)$ in the Nadaraya-Watson and nearest neighbor formulation, respectively. The addition of spurious covariates has a similar effect as robustification but crucially does comes with a loss of performance.
\end{remark}

\begin{figure}
  \centering
  \begin{subfigure}{0.45\textwidth}
    \includegraphics[width=\textwidth]{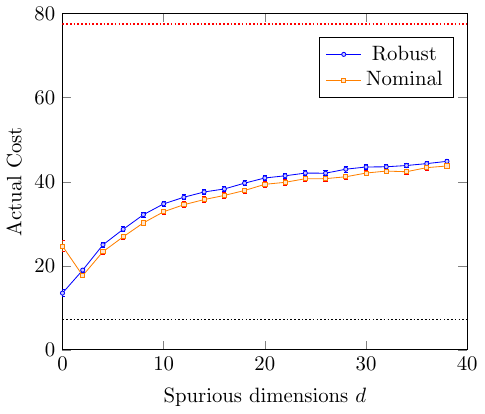}
    \caption{Nadaraya-Watson Formulations}
  \end{subfigure}%
  \begin{subfigure}{0.45\textwidth}
    \includegraphics[width=\textwidth]{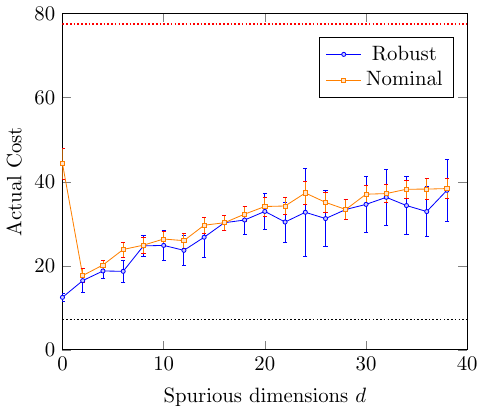}
    \caption{Nearest Neighbors Formulations}
  \end{subfigure}
  \caption{The average actual cost $\E{D^\star}{L(z, y)|x=\obs}$ of the decisions proposed by the nominal and robust Nadaraya-Watson formulation as well as the nominal and robust nearest neighbors formulation. The reported cost is the average among $20,000$ random training sets each of length $n=200$. The dimension $d$ reflects the number of spurious covariates introduced. The black line correspond to the optimal full information cost $\min_z\, \E{D^\star}{L(z, y)|x=\obs}$, while the red line represents the optimal no information cost $\min_z\, \E{D^\star}{L(z, y)}$. The robust formulations dominate with large statistical significance their nominal counterparts when no spurious covariates ($d=0$) are introduced. When spurious covariates are introduced ($d>0$) the cost predictions $\Es{D_{\train[n]}}{L(z, y)|x=\obs}$ on which all formulations are based tend to be under-calibrated leading to both a loss of performance and leave limited room for improvement via robustification. }
  \label{fig:dimension}
\end{figure}

\begin{figure}[ht]
    \centering
  \begin{subfigure}{0.45\textwidth}
    \includegraphics[width=\textwidth]{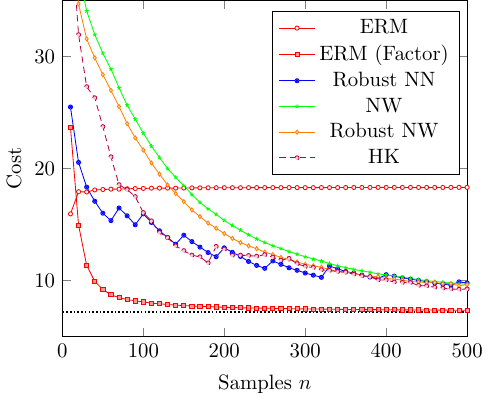}
  \end{subfigure}%
  \begin{subfigure}{0.45\textwidth}
    \includegraphics[width=\textwidth]{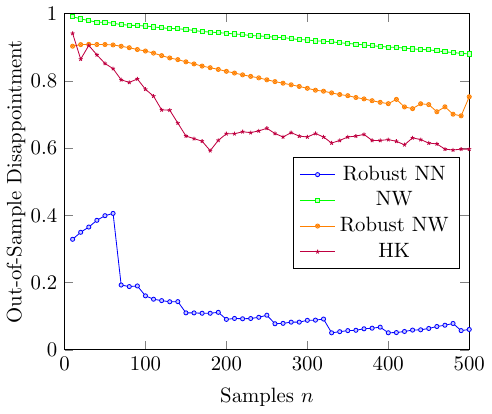}
  \end{subfigure}
  \caption{Comparison with the empirical risk minimization (ERM) and Nadaraya-Watson (NW) formulations proposed in \citet{rudin2014big}. The reported cost $\E{D^\star}{L(z_{\data[n]}, y)|x=\obs}$ is the average among a $1,000$ random data sets. Naive application of the ERM formulation yields a biased learning method which does not achieve the minimum cost (indicated as the dotted black line) even in the large sample limit in contrast to the the nonparametric robust Nadaraya-Watson and nearest neighbors formulations as well as the formulation (HK) proposed by \citet{hanasusanto2013robust}. With the help of a factor model, the ERM formulation can be made to achieve the minimum cost and in fact does so with fewer samples than the nonparametric learning formulations. We also show the out-of-sample disappointment of the three nonparametric approaches.
    We remark here that as empirical risk formulations do not come with a cost estimate for their optimal action in the context of interest they do not come with a similar notion of out-of-sample disappointment.
    Our bootstrap robust nearest neighbors formulation dominates here the other two nonparametric formulations both in terms of expected cost as well as out-of-sample disappointment.
  }
  \label{fig:comparison-erm}
\end{figure}

A supervised data version of this newsvendor problem is discussed by \citet{rudin2014big} based on the empirical risk formulation stated in Equation \eqref{eq:emp_risk} with $\mc C=\mc F_{\lin}$ the set of all linear functions and an estimate-first-optimize-later formulation using a Nadaraya-Watson estimator.
In this synthetic example, the full information solution stated in Equation (\ref{eq:sol-news-vendor}) simplifies to
$$z^\star(\obs=(t_0, w_0))=  100+(t_0-20) + 20 \cdot \mb I(w_0\in\{\mathrm{Weekend}\}) + 4\sqrt{2}\,\erf^{-1}(\tfrac{(b-h)}{(b+h)})$$
where $\erf$ denotes the standard error function. When representing the categorical variables naively as integers, i.e., $\{\mathrm{Monday}, \dots, \mathrm{Sunday}\}=\{1,\dots,7\}$, the function $z^\star(\cdot)\not\in \mc F_{\lin}$ is not linear in this representation and hence as discussed in the introduction, the associated empirical risk minimization formulation is biased.  Consequently, the empirical risk formulation does not converge to the full information solution as empirically verified in Figure \ref{fig:comparison-erm}. In stark contrast, our robust Nadaraya-Watson and nearest neighbors formulations as well as the formulation proposed by \citet{hanasusanto2013robust} do not require such parametric assumptions on the full information solution to be asymptotically consistent. Indeed, as illustrated in Figure \ref{fig:comparison-erm}, all eventually achieve the full information minimum cost when $n$ tends to infinity. In this example, our bootstrap robust nearest neighbors formulation dominates the other nonparametric formulations both in terms of expected cost as well as out-of-sample disappointment.

Clearly, by considering a standard ``dummy'' factor representation \citep{friedman2001elements}[Section 3.2] of the categorical variables the full information solution $z^\star(\cdot)$ becomes linear. Consequently, the empirical risk formulation with such factor model does converge to the full information solution and does so faster than our nonparametric learning formulations; see again Figure \ref{fig:comparison-erm}. This observation is not surprising and very much in line with the the fact that parametric learning is statistically more powerful when its parametric assumptions apply but may fail when its assumptions do not hold. In practice, which method should be favored will hence strongly depend on whether or not sensible parametric assumptions can be assumed on the prescription function $z^\star(\cdot)$ in a particular application. Furthermore, the computational speed of each formulation will in practice also be of importance.
In Figure \ref{fig:computational} we indicate that as each formulation requires merely the solution of a convex optimization problem the computational times do not exceed one second. However, as remarked already by \citet{rudin2014big}, the nonparametric formulations are as a whole computationally less demanding than the empirical risk minimization formulations.

\begin{figure}[ht]
  \centering
  \includegraphics[width=0.45\textwidth]{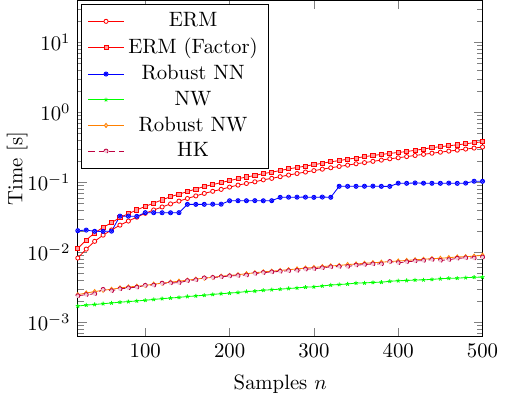}
  \caption{Comparison of all discussed formulations in terms of the time it takes to solve them. As ultimately all formulations require the solution of a convex optimization problem the computational times do not exceed one second. As remarked already by \citet{rudin2014big}, the nonparametric formulations (Robust NN, NW, Robust NW, HK) are as a whole computationally less demanding than the empirical risk formulations (ERM, ERM (Factor)). The difference between our bootstrap robust Nadaraya-Watson formulation and the formulation proposed by \citet{hanasusanto2013robust} in terms of computational times is negligible and barely visible.
  }
  \label{fig:computational}
\end{figure}

\section*{Acknowledgments}

The authors would like to thank the reviewers for their careful reading of this manuscript which greatly improved its overall exposition.
The second author is generously supported by the Early Post.Mobility fellowship No.\ 165226 of the Swiss National Science Foundation.

\setlength\bibitemsep{10pt}
\printbibliography

\appendix

\section{Uniform Consistent Estimators}
\label{sec:unif-cons-estim}

Nadaraya-Watson estimation can be shown to be point-wise consistent when using an appropriately scaled bandwidth parameter $h(n)$ for any of the smoother functions listed in Figure \ref{fig:kernels}:
  
\begin{theorem}[{\citet{walk2010strong}}]
  \label{thm:consistency-nw}
  Let us have a loss function satisfying $\E{D^\star}{\abs{L(\bar z, y)} \cdot \max\{\log(\abs{L(\bar z, y)}),0\}} < \infty$ for all $\bar z$. Let the bandwidth $h(n)={cn^{-\delta}}$ for some $c>0$ and $\delta\in(0, \tfrac{1}{\dim(x)})$. Let $S$ be any of the smoother functions listed in Figure \ref{fig:kernels} and the weighing function is taken to be $w_n(x, \obs) = S(\norm{x-\obs}_2/h(n))$. Then, our estimation formulation (with $k(n)=n$) is asymptotically consistent for any $D^\star$, i.e., with probability one we have
  \[
    \lim_{n\to\infty} \Es{D_{\data[n]}}{L(\bar z, y)|x=\obs} = \E{D^\star}{L(\bar z, y) | x=\obs} \qquad \forall \bar z.
  \]
\end{theorem}

Nearest neighbors estimation is also consistent under very mild technical conditions provided that the number of neighbors $k(n)$ is scaled appropriately with the number of training data samples:

\begin{theorem}[{\citet{walk2010strong}}]
  \label{thm:consistency-nn}
  Assume $\dist(\bar d=(\bar x, \bar y), \obs) = \norm{\bar x-\obs}_2$ and follow the random tie breaking rule discussed in \citet{gyorfi2006distribution}. Let $k(n)=\ceil{\min\{cn^{\delta}, n\}}$ for some $c>0$ and $\delta\in(0,1)$. Then, our estimation formulation (with $w_n(\bar x, \obs)=1$) is asymptotically consistent for any $D^\star$, i.e., with probability one we have
  \[
    \lim_{n\to\infty} \Es{D_{\data[n]}}{L(\bar z, y)|x=\obs} = \E{D^\star}{L(\bar z, y) | x=\obs} \quad \forall \bar z.
  \]  
\end{theorem}

An estimator is denoted as uniformly consistent if the event
\begin{equation}
  \label{eq:uniform-consistency}
  \lim_{n\to\infty}D^{\star\infty}\left[\max_{\bar z}\, |\Es{D_{\data[n]}}{L(\bar z, y)|x=\obs} - \E{D^\star}{L(\bar z, y) | x=\obs}|\leq \epsilon\right]=1
\end{equation}
for any $\epsilon>0$ and $D^\star$. That is, the estimator predicts the cost for all potential decisions well to any arbitrary accuracy when only given access to a sufficiently large amount of training data. It is not difficult to see that this uniform consistency in turn implies the consistency of the budget minimization formulation. Indeed, with probability one we have
\begin{align}  
    & \textstyle\lim_{n\to\infty}D^{\star\infty}\left[\min_{z}\,\E{D^\star}{L(z, y)|x=\obs}+\epsilon \geq \Es{D_{\data[n]}}{L(z^\star(\obs), y)|x=\obs}\right]=1 \label{eq:consistency-1}\\[0.5em]
    & \Es{D_{\data[n]}}{L(z^\star(\obs), y)|x=\obs} \geq \Es{D_{\data[n]}}{L(z_{\data[n]}, y)|x=\obs} \label{eq:consistency-2}\\[0.5em]
    & \textstyle\lim_{n\to\infty}D^{\star\infty}\left[\Es{D_{\data[n]}}{L(z_{\data[n]}, y)|x=\obs} \geq \E{D^\star}{L(z_{\data[n]}, y)|x=\obs}-\epsilon\right]=1 \label{eq:consistency-3}
\end{align}
Here, the limit (\ref{eq:consistency-1}) follows from the consistency of the estimator with regards to the full information decision $z^\star(\obs)$. Inequality (\ref{eq:consistency-2}) is a direct consequence of the characterization of the data-driven decision $z_{\data[n]}$ as a minimizer of $\Es{D_{\train[n]}}{L(z, y)|x=\obs}$. Remarking that $\max_{\bar z}\, |\Es{D_{\data[n]}}{L(\bar z, y)|x=\obs} - \E{D^\star}{L(\bar z, y) | x=\obs}| \leq \epsilon \implies \Es{D_{\data[n]}}{L(z_{\data[n]}, y)|x=\obs} \geq \E{D^\star}{L(z_{\data[n]}, y)|x=\obs}-\epsilon$ the limit  (\ref{eq:consistency-3}) follows from the uniform consistency of the estimator stated in Equation (\ref{eq:uniform-consistency}). Chaining the inequalities in results (\ref{eq:consistency-1}), (\ref{eq:consistency-2}) and (\ref{eq:consistency-3}) implies that
\[
  \textstyle\lim_{n\to\infty}D^{\star\infty}\left[ 0 \leq \E{D^\star}{L(z_{\data[n]}, y)|x=\obs} - \min_{z}\,\E{D^\star}{L(z, y)|x=\obs} \leq 2\epsilon\right]=1.
\]
Here the optimality gap between the cost of the full information decision $z^\star(\obs)$ and the cost of the data-driven decision $z_{\data[n]}$ is bounded by $2\epsilon$. As this result holds for any arbitrary $\epsilon>0$ the data-driven formulation is asymptotically consistent.

\section{Proofs}

\subsection{Proof of Theorem \ref{thm:nominal-budget}}
\label{ssec:proof-opt-char}

\begin{proof}
  First note that the domain of the partial estimators as a function of the distribution $D$ satisfies
  $$\rm{dom}~\Esp{D}{L(z, Y)|x=\obs} \subseteq \mc D_{n}^j.$$
  For a distribution $D$ to be in the domain of the partial estimator the constraints in equation \eqref{eq:partial:estimator} must indeed be feasible. In other words, there must exists some $P$ and $s>0$ for which $s\cdot D[\bar x, \bar y] = P[\bar x, \bar y]$ for all $(\bar x, \bar y)\in \Omega_n$. The last two constraints in equation \eqref{eq:partial:estimator} then imply that any such $D\in \mc D_n$ must also be in $\mc D^j_{n}$.  

  Any $D\in \mc D_{n,n}$ is the empirical distribution of some bootstrap data set, say $\bs[n]$, consisting of $n$ observations from the training data set. Each set $\mc D_{n,n}^j\defn \mc D_n^j\cap\mc D_{n,n}$ has a very natural interpretation in terms of $N^j_n(\obs)$ being the smallest neighborhood containing at least $k$ observations of this associated bootstrap data set $\bs[n]$. Indeed, $D\in\mc D_{n,n}^j$  is in terms of the associated data set equivalent to 
  \[
    \begin{array}{llll}
     k & \leq & \sum_{(\bar x, \bar y)\in\bs[n]} \mb 1 \{(\bar x, \bar y)\in N^{j}_{n}(\obs)\}  &= n \cdot \sum_{(\bar x, \bar y)\in N^j_n(\obs)} D[\bar x, \bar y], \\[0.5em]
     k & > & \sum_{(\bar x, \bar y)\in\bs[n]}\mb 1\{(\bar x, \bar y)\in N^{j\!-\!1}_{n}\! (\obs)\} &=  n\cdot \sum_{(\bar x, \bar y)\in N^{j\!-\!1}_{n}\! (\obs)} D[\bar x, \bar y].
    \end{array}
  \]
  The first inequality implies that the neighborhood $N^j_n(\obs)$ contains at least $k$ observations of the associated data set. Note that the sets $N^j_n(\obs)$ are increasing with increasing $j$ in terms of set inclusion. The latter inequality hence implies that the biggest smaller neighborhood $N^{j\!-\!1}_n(\obs)$ does not contain $k$ observations. Both conditions taken together thus imply that $N^j_n(\obs)$ is the smallest neighborhood which contains at least $k$ samples of the bootstrap data set. Any $D$ in $\mc D_{n,n}$ is an element in one and only one set $\mc D^j_{n,n}$ as the smallest neighborhood  containing at least $k$ samples is uniquely defined for any data set. Formally, $\mc D^j_{n,n}\cap\mc D^{j'}_{n,n}=\emptyset$ for all $j\neq j'$ and moreover $\cup_{j\in[n]} \mc D_{n,n}^j=\mc D_{n,n}$. Notice also that the only feasible $s$ in the constraints defining the partial predictors in equation \eqref{eq:partial:estimator} is the particular choice $s = \tfrac{1}{\sum_{(\bar x, \bar y)\in N^j_n(\obs)} w_n(\bar x, \obs) \cdot P[\bar x, \bar y]}>0$. Hence, we must have that for any $D\in\mc D^j_{n,n}$ the partial estimator equates to $$\Esp{D}{L(\bar z, y)|x=\obs} = \frac{\textstyle\sum_{(\bar x,\bar y)\in N^j_{n}(\obs)} L(\bar z, \bar y) \cdot w_n(\bar x,\obs)\cdot D(\bar x, \bar y)}{\textstyle\sum_{(\bar x,\bar y)\in N_n^j(\obs)} w_n(\bar x,\obs) \cdot D(\bar x, \bar y)} \qquad \forall \bar z$$ which is precisely the weighted average over the neighborhood $N^j_n(\obs)$. We have hence for all $\bar z$ that 
  \begin{equation*}
    \begin{array}{r@{}l}
      \max_{j\in [1,\dots,n]} \, \Esp{D}{L(\bar z, y)|x=\obs} ~& = \left\{
                                                           \begin{array}{ll}
                                                             \frac{\textstyle\sum_{(\bar x,\bar y)\in N^1_{n}(\obs)} L(\bar z, \bar y) \cdot w_n(\bar x,\obs)\cdot D[\bar x, \bar y]}{\textstyle\sum_{(\bar x,\bar y)\in N_n^1(\obs)} w_n(\bar x,\obs) \cdot D[\bar x, \bar y]} & \rm{for~} D\in \mc D^1_n ,\\
                                                             \vdots &  \vdots \\
                                                             \frac{\textstyle\sum_{(\bar x,\bar y)\in N^n_{n}(\obs)} L(\bar z, \bar y) \cdot w_n(\bar x,\obs)\cdot D(\bar x, \bar y)}{\textstyle\sum_{(\bar x,\bar y)\in N_n^n(\obs)} w_n(\bar x,\obs) \cdot D(\bar x, \bar y)} & \rm{for~} D\in \mc D^n_n.
                                                           \end{array}\right.\\[4em]
                                                         & = \Es{D}{L(\bar z, y)|x=\obs}
    \end{array}
  \end{equation*}
  as we have argued that $D\in \mc D^j_{n,n}$ if and only if $N^j_n(\obs)$ is the smallest neighborhood containing at least $k$ data points. As the empirical distribution $D$ and associated data set $\bs[n]$ were chosen arbitrary the result follows.
\end{proof}

\subsection{Proof of Proposition \ref{prop:bootstr-perf-optimal}}
\label{ssec:proposition-bootstrap-smallest}

\begin{proof}
  Observe that $\set{D \in \mc D_n}{R(D, D_{\train})\leq r}\,\cap\, \mc N=\emptyset$ by construction. Furthermore, $\set{D \in \mc D_n}{R(D, D_{\train})\leq r}$ is convex as $R$ is convex in its first argument. Without loss of generality, the neighborhood $\mc N$ is convex as well.
   By the Hahn–Banach separation Theorem an open convex set can be separated linearly from any other disjoint convex set. That is, there must exist a function $G$ and constant $a\in \Re$ so that
  \[
    \E{D}{G(x, y)} \leq a < \E{D'}{G(x, y)}   \quad \forall D\in \set{D \in \mc D_n}{R(D, D_{\train})\leq r}, ~D'\in \mc N.
  \]
  Hence, $\interior\mc N  = \mc N \subseteq \mc R' \defn \tset{D\in \mc D_n}{\E{D}{G(x, y)} >a \geq \sup_{R(D', D_{\train})\leq r} \E{D'}{G(x, y)}} \subseteq \mc R $ with $\mc R=\tset{D\in \mc D_n}{ {\rm E}^n_{D}[L(z^{\br}_{\train}(\obs), y)|x=\obs] > \sup_{R(D', D_{\train})\leq r} {\rm E}^n_{D'}[L(z^{\br}_{\train}(\obs), y)|x=\obs] }$ the disappointment set of a nominal formulation based on the cost estimator $(z, D)\mapsto {\rm E}^n_{D}[L(z, y)|x=\obs]=\E{D}{G(x, y)}$. Consequently, following inequality \eqref{eq:exponential-tightness} we have
  \[
    - \inf_{D' \in \mc N} \, B(D', D_{\train}) \leq \liminf_{n\to\infty} \frac{1}{n} \log D_{\train}^\infty\left[ D_{\bootstrap[n]} \in \mc N \right]\leq \liminf_{n\to\infty} \frac{1}{n} \log D_{\train}^\infty\left[ D_{\bootstrap[n]} \in \mc R \right]
  \]
  The stated result hence follows should we have $\inf_{D' \in \mc N} \, B(D', D_{\train})<r$. As $\mc N$ is an open set there exists $\lambda\in (0, 1)$ so that $D(\lambda)=\lambda D+(1-\lambda)D_{\train[n]}\in \mc N$. From convexity of $B$ we have $B(D(\lambda), D_{\train}) \leq \lambda B(D, D_{\train[n]}) + (1-\lambda) B(D_{\train}, D_{\train}) < r$ for any $\lambda\in (0,1)$ using that $B(D, D_{\train})=r$ and $B(D_{\train}, D_{\train})=0$. Hence, indeed we have $\inf_{D' \in \mc N} \, B(D', D_{\train})\leq B(D(\lambda), D_{\train}) < r$.
\end{proof}

\subsection{Proof of Corollary \ref{cor:nw_formulation}}
\label{ssec:corollary_nw_formulation}

\begin{proof}

  Remark that from the definition of the Nadaraya-Watson cost estimate $$\Es{D_{\train[n]}}{L(z, y)|x=\obs} \defn \tfrac{\E{D_{\train[n]}}{ L(z, y)\cdot w_n(x,\obs)}}{\E{D_{\train[n]}}{w_n(x,\obs)}}$$ given in \eqref{def:nw} it follows that we have
  \[
    \begin{array}{rl}
      \Es{D_{\train[n]}}{L(z, y)|x=\obs} \defn \max_{s>0, P} & \sum_{(\bar x, \bar y)\in \Omega_n}\, w_n(\bar x, \obs)\cdot L(z, \bar y)\cdot  P[\bar x, \bar y] \\[0.5em]
      \st  & P[\bar x, \bar y] = D[\bar x, \bar y] \cdot s \quad \forall (\bar x, \bar y) \in \Omega_n,\\[0.5em]
                                                              & \sum_{(\bar x, \bar y)\in\Omega_n} P[\bar x, \bar y] = s,~ \sum_{(\bar x, \bar y)\in \Omega_n} w_n(\bar x, \obs) \cdot P[\bar x, \bar y] = 1.
    \end{array}
  \]
  Indeed, the only feasible $s$ is such that $s = \tfrac{1}{\sum_{(\bar x, \bar y)\in \Omega_n} w_n(\bar x, \obs) \cdot P[\bar x, \bar y]}$. Hence, the only feasible $P$ is $P = \tfrac{D}{\sum_{(\bar x, \bar y)\in \Omega_n} w_n(\bar x, \obs) \cdot P[\bar x, \bar y]}$ and the equivalence follows. The chain of equalities
  \[
    \begin{array}{r@{}l}
  \set{(s, P)}{\exists D ~\st~ s\cdot D= P,\,R(D, D_{\train[n]})\leq r} & = \set{(s, P)}{R(\tfrac{P}{s}, D_{\train[n]}) \leq r} \\[0.5em]
                                                                      &  =  \set{(s, P)}{s\cdot R(\tfrac{P}{s}, D_{\train[n]}) \leq s\cdot r}    \end{array}
  \]
  imply that the robust budget function $c_{n}(z, D_{\train[n]}, \obs)$ corresponds exactly to the optimization formulation claimed in the corollary.
\end{proof}

\subsection{Proof of Theorem \ref{thm:consitencty-robust-nw}}
\label{ssec:consitencty-robust-nw}

\begin{proof}
  We first show the uniform convergence of the robust budget function to its nominal counterpart when the loss function $L(\bar z, \bar y) < \bar L < \infty$ is bounded. Let us first consider a given training data set $\train[n]$. Note that because of its definition as robust counterpart of our estimator, we have that our robust budget can be bounded as
  \[
    \Es{D_{\train[n]}}{L(\bar z, y)|x=\obs}\leq c_{n}(\bar z, D_{\train[n]}, \obs) \leq \Es{D_{\wc[n]}}{L(\bar z, y)|x=\obs}+\alpha
  \]
  for some worst-case distributions $D_{\wc[n]}$ at distance at most $B(D_{\wc[n]}, D_{\train[n]})\leq r(n)$ from the training distribution for any arbitrary $\alpha>0$. In terms of the total variation distance, we have that $\norm{D_{\wc[n]} - D_{\train[n]}}_1 \leq \sqrt{\tfrac{B(D_{\wc[n]} , D_{\train[n]})}{2}} \leq \sqrt{\tfrac{r(n)}{2}}$ following Pinkser's inequality.

 The Nadaraya-Watson estimate based on the worst-case distribution is the fraction
  \[
    \Es{D_{\wc[n]}}{L(\bar z, y)|x=\obs} \defn \frac{\textstyle\sum_{(\bar x,\bar y)\in \Omega_n} L(\bar z, \bar y) \cdot w_n(\bar x,\obs)\cdot D_{\wc[n]}[\bar x, \bar y]/h(n)^d}{\sum_{(\bar x,\bar y)\in \Omega_n} w_n(\bar x,\obs)\cdot D_{\wc[n]}[\bar x, \bar y]/h(n)^d}
  \]
  where we denote here $d=\dim(x)$ for conciseness.
  We have that the denominator of the Nadaraya-Watson estimator is lower bounded by
  \begin{align*}
    & \textstyle\sum_{(\bar x, \bar y)\in\Omega_n} \tfrac{w_n(\bar x, \obs)}{h(n)^d} D_{\wc[n]}[\bar x, \bar y]\\[0.5em]
    = & \textstyle \sum_{(\bar x, \bar y)\in\Omega_n} \tfrac{w_n(\bar x, \obs)}{h(n)^d} D_{\train[n]}[\bar x, \bar y]
         +\sum_{(\bar x, \bar y)\in\Omega_n} \tfrac{w_n(\bar x, \obs)}{h(n)^d}   \left(D_{\wc[n]}[\bar x, \bar y]-D_{\train[n]}[\bar x, \bar y]\right)\\[0.5em]
\geq  &\textstyle \sum_{(\bar x, \bar y)\in\Omega_n} \tfrac{w_n(\bar x, \obs)}{h(n)^d} D_{\train[n]}[\bar x, \bar y] - \left(\max_{(\bar x, \bar y)\in \Omega_n}w_n(\bar x, \obs)\right)\tfrac{\sqrt{\tfrac{r(n)}{2}}}{h(n)^d}\\[0.5em]
    \geq & \textstyle \sum_{(\bar x, \bar y)\in\Omega_n} \tfrac{w_n(\bar x, \obs)}{h(n)^d} D_{\train[n]}[\bar x, \bar y] - \tfrac{\sqrt{\tfrac{r(n)}{2}}}{h(n)^d}
  \end{align*}
  Here the first inequality follows from the Cauchy-Schwartz inequality $\abs{a\tpose b} \leq \norm{a}_\infty\cdot \norm{b}_1 $. Notice that here the weights $0\leq w_n(\bar x, \obs)\leq w_n(\obs, \obs)\leq 1$ are all non-negative and bounded from above by one for all smoother functions in Figure~\ref{fig:kernels}. Lemma 6 in \citet{walk2010strong} establishes that the limit $$\liminf_{n\to\infty} \textstyle \sum_{(\bar x, \bar y)\in\Omega_n} \tfrac{w_n(\bar x, \obs)}{h(n)^d} D_{\data[n]}[\bar x, \bar y] = 2d(x_0)>0$$ is positive with probability one. Taken together with the premise $\lim_{n\to\infty} \tfrac{\sqrt{r(n)}}{h(n)^d}=0$ this establishes the existence of a large enough sample size $n_0$ such that for all $n\geq n_0$ the denominator of the Nadaraya-Watson estimator satisfies $\sum_{(\bar x, \bar y)\in\Omega_n} \tfrac{w_n(\bar x, \obs)}{h(n)^d}  D_{\wc[n]}[\bar x, \bar y] \!\geq\! d(\obs)$ and $\sum_{(\bar x, \bar y)\in\Omega_n}\!\! \tfrac{w_n(\bar x, \obs)}{h(n)^d} D_{\train[n]}[\bar x, \bar y]\geq 2\tfrac{\sqrt{\tfrac{r(n)}{2}}}{h(n)^d}$. Similarly, the nominator of the Nadaraya-Watson estimator satisfies
  \[
    \begin{array}{rl}
      & \sum_{(\bar x, \bar y)\in\Omega_n} L(\bar z, \bar y)\cdot w_n(\bar x, \obs) \cdot D_{\wc[n]}[\bar x, \bar y]/h(n)^d\\[0.5em]
      \leq  &\sum_{(\bar x, \bar y)\in\Omega_n} L(\bar z, \bar y)\cdot w_n(\bar x, \obs)\cdot D_{\train[n]}[\bar x, \bar y]/h(n)^d + \sqrt{\tfrac{r(n)}{2}}/h(n)^d.
    \end{array}
  \]
  In what follows we will use the inequality $\tfrac{a}{(b-x)} \leq \tfrac{a}{b} + \tfrac{2a}{b} \cdot x$ for all $x\leq \tfrac{b}{2}$ when $a, b>0$. This inequality follows trivially from the definition of convexity of the function $\tfrac{a}{(b-x)}$ for all $x\leq b$ when $a, b>0$.  Using the previous inequalities we can establish when $n\geq n_0$ the following claims 
  \begin{align*}
    & \Es{D_{\wc[n]}}{L(\bar z, y)|x=\obs} \\
    & \leq \frac{\textstyle\sum_{(\bar x,\bar y)\in \Omega_n} L(\bar z, \bar y) \cdot w_n(\bar x,\obs)\cdot D_{\train[n]}[\bar x, \bar y]/h(n)^d}{\sum_{(\bar x,\bar y)\in \Omega_n} w_n(\bar x,\obs)\cdot D_{\wc[n]}[\bar x, \bar y]/h(n)^d} +  \frac{\sqrt{\tfrac{r(n)}{2}}}{h(n)^d d(x_0)} \\
                                     & \leq \frac{\textstyle\sum_{(\bar x,\bar y)\in \Omega_n} L(\bar z, \bar y) \cdot w_n(\bar x,\obs)\cdot D_{\train[n]}[\bar x, \bar y]/h(n)^d}{\sum_{(\bar x,\bar y)\in \Omega_n} w_n(\bar x,\obs)\cdot D_{\train[n]}[\bar x, \bar y]/h(n)^d-\tfrac{\sqrt{\tfrac{r(n)}{2}}}{h(n)^d}} +  \frac{\sqrt{\tfrac{r(n)}{2}}}{h(n)^d d(x_0)}\\
                                     & \leq \Es{D_{\train[n]}}{L(\bar z, y)|x=\obs} + 2\Es{D_{\train[n]}}{L(\bar z, y)|x=\obs}\tfrac{\sqrt{\tfrac{r(n)}{2}}}{h(n)^d} +  \frac{\sqrt{\tfrac{r(n)}{2}}}{h(n)^d d(x_0)}\\
    &\leq \Es{D_{\train[n]}}{L(\bar z, y)|x=\obs} + (2\bar L +  \tfrac{1}{d(x_0)}) \tfrac{\sqrt{\tfrac{r(n)}{2}}}{h(n)^d}.
  \end{align*}
  The nominal Nadaraya-Watson estimator is uniformly consistent, i.e., $$|\Es{D_{\data[n]}}{L(\bar z, y)|x=\obs} - \E{D^\star}{L(\bar z, y)|x=\obs}|\leq \epsilon(n)$$ for $\lim_{n\to\infty} \epsilon(n) = 0$ as discussed before. It hence trivially follows that the robust Nadaraya-Watson estimator is uniformly consistent as well. Indeed, from the previous inequality it follows that
  \[
    |c_{n}(\bar z, D_{\data[n]}, \obs) - \E{D^\star}{L(\bar z, y)|x=\obs}|\leq \epsilon(n) + (2\bar L +  \tfrac{1}{d(x_0)}) \tfrac{\sqrt{\tfrac{r(n)}{2}}}{h(n)^d} + \alpha
  \]
  with probability one for all $\bar z$. This inequality holds for any arbitrary $\alpha>0$.
  Uniform consistency then directly implies  here an asymptotically diminishing optimality gap  $$\E{D^\star}{L(z_{\data[n]}^r, y)|x=\obs} - \min_{z}\, \E{D^\star}{L(\bar z, y)|x=\obs} \leq 2\epsilon(n) + (4\bar L +  \tfrac{2}{d(x_0)}) \tfrac{\sqrt{\tfrac{r(n)}{2}}}{h(n)^d}.$$
  Using that $\lim_{n\to\infty} \tfrac{\sqrt{r(n)}}{h(n)^d}$ yields the wanted result immediately.
\end{proof}

\subsection{Proof of Theorem \ref{thm:consitencty-robust-nn}}
\label{ssec:consitencty-robust-nn}

\begin{proof}
  We show the uniform convergence of the robust budget function to the unknown cost, that is,  and any bounded function $L(\bar z, \bar y) < \bar L < \infty$ for all $\bar z$ and $\bar y$. Let us first consider a given training data set $\train[n]$ without ties. That is, we have that $\abs{N^j_n(\obs)}=j$ for all $j\in [n]$. Note that because of its definition as robust counterpart of the estimator $\Es{D_{\train[n]}}{L(\bar z, y)|x=\obs}$, we have that the robust cost can be bounded as $$\Es{D_{\train[n]}}{L(\bar z, y)|x=\obs}\leq c_{n}(z^r_{\train[n]}, D_{\train[n]}, \obs) \leq {\rm{E}}_{D_{\wc[n]}}^{j^\star}[L(\bar z, y)|x=\obs]+\alpha$$ for some worst-case distributions $D_{\wc[n]}\in \mc D^{j\star}_n$ at distance at most $B(D_{\wc[n]}, D_{\train[n]})\leq r(n)$ from the training distribution for any arbitrary $\alpha>0$. In terms of the total variation distance, we have that $\norm{D_{\wc[n]} - D_{\train[n]}}_1 \leq \sqrt{\tfrac{B(D_{\wc[n]} , D_{\train[n]})}{2}} \leq \sqrt{\tfrac{r(n)}{2}}$ following Pinkser's inequality. The nearest-neighbors estimate based on the worst-case distribution is defined as the fraction
  \[
    \Es{D_{\wc[n]}}{L(\bar z, y)|x=\obs} \defn \frac{\textstyle\sum_{(\bar x,\bar y)\in N^{j\star}_n(\obs)} L(\bar z, \bar y) \cdot D_{\wc[n]}[\bar x, \bar y]}{\sum_{(\bar x,\bar y)\in N^{j\star}_n(\obs)} D_{\wc[n]}[\bar x, \bar y]}.
  \]
  The neighborhood parameter $j^\star$ satisfies by definition
  \(
    \textstyle\sum_{(\bar x, \bar y) \in N^{j\star}_n(\obs)} D_{\wc[n]}[\bar x, \bar y] \geq k(n)/n.
  \)
  The previous inequality bounds the denominator from below by $\tfrac{k(n)}{n}$. Using the Cauchy-Schwartz inequality $\abs{a\tpose b} \leq \norm{a}_\infty\cdot \norm{b}_1 $ as well as the Pinkser inequality, the nominator of the Nadaraya-Watson estimator satisfies
  \[
    \begin{array}{rl}
      & \sum_{(\bar x, \bar y)\in N^{j\star}_n(\obs)} L(\bar z, \bar y) \cdot D_{\wc[n]}[\bar x, \bar y]\\[0.5em]
      \leq & \sum_{(\bar x, \bar y)\in N^{j\star}_n(\obs)} L(\bar z, \bar y) \cdot D_{\train[n]}[\bar x, \bar y]+ \bar L  \sqrt{\tfrac{r(n)}{2}}.
    \end{array}
  \]
  We also have that from the definition of the total variation distance that $\Vert D_{\train[n]}-D_{\wc[n]}\Vert_1\geq D_{\train[n]}[N^{j\star}_n(\obs)] - D_{\wc[n]}[N^{j\star}_n(\obs)] \geq \tfrac{(j^\star-k(n))}{n}$. We have also $\Vert D_{\wc[n]}-D_{\train[n]}\Vert_1 \geq D_{\wc[n]}[N^{j\star-1}_n(\obs)] - D_{\train[n]}[N^{j\star-1}_n(\obs)] \geq \tfrac{(k(n)-j^\star)}{n}$. Last two inequalities imply that we can use the bound $\sqrt{r(n)/2}\geq\Vert D_{\train[n]}-D_{\wc[n]}\Vert_1 \geq \abs{k(n)-j^\star}/n$. By applying first the Cauchy-Schwartz inequality again and then the previously obtained bounds we can obtain
  \[
    \begin{array}{rl}
      & \sum_{(\bar x, \bar y)\in N^{j\star}_n(\obs)} L(\bar z, \bar y) D_{\train[n]}[\bar x, \bar y]\\[0.5em]
      \leq &  \sum_{(\bar x, \bar y)\in N^{k(n)}_n(\obs)} L(\bar z, \bar y) D_{\train[n]}[\bar x, \bar y] + \bar L | D_{\train[n]}\{N^{j\star}_n(\obs)]-D_{\train[n]}[N^{k(n)}_n(\obs)]|\\[0.5em]
      \leq & \sum_{(\bar x, \bar y)\in N^{k(n)}_n(\obs)} L(\bar z, \bar y) D_{\train[n]}[\bar x, \bar y] + \bar L \abs{j^\star-k(n)}/n\\[0.5em]
      \leq & \sum_{(\bar x, \bar y)\in N^{k(n)}_n(\obs)} L(\bar z, \bar y) D_{\train[n]}[\bar x, \bar y] + \bar L  \sqrt{\tfrac{r(n)}{2}}.
    \end{array}
  \]
  Hence,
  \begin{equation}
    \label{eq:ineq-sandwich}
    \Es{D_{\train[n]}}{L(\bar z, y)|x=\obs}\leq c_n(\bar z, D_{\train[n]}, \obs)
    \leq \Es{D_{\train[n]}}{L(\bar z, y)|x=\obs} + \frac{2 \bar L  n\sqrt{\tfrac{r(n)}{2}}}{k(n)}+\alpha
  \end{equation}
  for any arbitrary training data set without ties. Ties among data points when using the random tie breaking method are a probability zero event which we may ignore.
  We already know that the nearest neighbors estimator is uniformly consistent. That is, we have that $|\Es{D_{\data[n]}}{L(\bar z, y)|x=\obs} - \E{D^\star}{L(\bar z, y)|x=\obs}| \leq \epsilon(n)$ with probability one and $\lim_{n\to\infty} \epsilon(n)=0$. When the robustness radius does shrinks at an appropriate rate, i.e., its size compared to the bandwidth parameter is negligible ($\lim_{n\to\infty} \tfrac{n\sqrt{r(n)}}{k(n)}=0$), then uniform consistency of budget estimator $c_n$ follows by taking the limit for $n$ tends to infinity for the chain of inequalities in \eqref{eq:ineq-sandwich} applied to $D_{\data[n]}$ and observing that $\alpha>0$ is arbitrarily small. Uniform consistency of the nearest-neighbors formulation follows by the exact same argument as given in proof of Theorem \ref{thm:consitencty-robust-nw} in case of the Nadaraya-Watson formulation.
\end{proof}

\subsection{Proof of Corollary~\ref{thm:bootstrap-nw}}
\label{ssec:proof-corollary}

\begin{proof}
  Let us fix a training data set with empirical training distribution $D_{\train[n]}$ and a given decision $z$. Let $\bar c_n\defn\Es{D_{\train[n]}}{L(\bar z, y)|x=\obs}$ be the budgeted cost based on the training data with $k(n)=n$. In order to prove the theorem, it suffices to characterize the probability of the event that the empirical distribution $D_{\bootstrap[n]}$ of random bootstrap data resampled from the training data realizes in the set
  \begin{equation*}
    \mc C \defn  \set{D \in \mc D_n}{
      \begin{array}{l} \exists s >0, ~\textstyle s \cdot \sum_{(\bar x, \bar y)\in \Omega_n} w_n(\bar x, \obs)\cdot L(\bar z, y) \cdot D(\bar x, \bar y) >  \bar c_n,\\[0.5em] s\cdot \sum_{(\bar x, \bar y)\in \Omega_n} w_n(\bar x,\obs) \cdot D(\bar x, \bar y)=1
      \end{array}
    }
  \end{equation*}
  as follows from Corollary \ref{cor:nw_formulation}.
  After eliminating the auxiliary variable $s$ we arrive at the description $\mc C =  \{D\in \mc D_n : \textstyle \sum_{(\bar x, \bar y)\in\Omega_n} w_n(\bar x, \obs)\cdot L(\bar z, y) \cdot D(\bar x, \bar y) >  \bar c_n \cdot \sum_{(\bar x, \bar y)\in\Omega_n} w_n(\bar x,\obs) \cdot D(\bar x, \bar y)\}$. The set $\mc C$ is a convex polyhedron. The robust budget cost $\bar c_n$ is constructed to ensure that $\inf_{D\in \mc C}\, R(D, D_{\train[n]}) > r$. Indeed, we have the rather direct implication $\bar{D} \in \mc C \!\implies\! \Es{\bar D}{L(\bar z, y)|x=\obs} > \bar c_n = \sup \,\{ \Es{ D}{L(\bar z, y)|x=\obs} \,|\, R(D, D_{\train[n]})\leq r \}$ which in turn itself implies $R(\bar D, D_{\train[n]}) > r$.  Hence, the result follows from the bootstrap inequality \eqref{eq:ldb} applied to the probability $D_{\train[n]}^\infty(D_{\bootstrap[n]} \in \mc C)$ as in this particular case the employed distribution distance function ($R=B$) coincides with the bootstrap distance function.
\end{proof}

\subsection{Proof Lemma \ref{lemma:nn-dual-representation}}
\label{ssec:proof-dual}

\begin{proof}
  We will employ standard Lagrangian duality on the convex optimization characterization \eqref{eq:nna:primal} of the partial nearest neighbors cost function associated $c^j_n(\bar z, D, \obs)$. The Lagrangian function associated with the primal optimization problem in \eqref{eq:nna:primal} is denoted here at the function 
  \begin{align*}
    & \mc L(P, s; \alpha, \beta, \eta, \nu) \defn \\
    & \textstyle \sum_{(\bar x, \bar y)\in N^j_{n}(\obs)} w_n(\bar x, \obs) \cdot  L(z, \bar y) \cdot P[x, y] \textstyle  + \left(1-\sum_{(\bar x, \bar y)\in N^{j-1}_{n}(\obs)} w_n(\bar x, \obs) \cdot P[\bar x, \bar y]\right)\alpha   \\
    & \textstyle + \left( \sum_{(\bar x, \bar y)\in N^j_{n}(\obs)} P[\bar x, \bar y] - \frac{k}{n} \cdot s \right) \eta_1 + \left( \frac{k-1}{n}  \cdot s - \sum_{(\bar x, \bar y)\in N^{j-1}_{n}(\obs)} P[\bar x, \bar y]  \right) \eta_2 \\
    & \textstyle + \left(\sum_{(\bar x, \bar y)\in \Omega_n} P[\bar x, \bar y] -s\right) \beta + \left( r \cdot s - \sum_{(\bar x, \bar y)\in \Omega_n} P[\bar x, \bar y] \log\left(\frac{P[\bar x, \bar y]}{ s \cdot D[\bar x, \bar y]}\right) \right) \nu 
  \end{align*}
  where $P$ and $s$ are the primal variables of the primal optimization problem \eqref{eq:nna:primal} and $\alpha$, $\beta$, $\eta$ and $\nu$ the dual variables associated with each of its constraints. Collecting the relevant terms in the Lagrangian function results in $\mc L(P, s; \alpha, \beta, \nu) =$
  \begin{align*}
    \textstyle\alpha +& \textstyle s (r \nu - \beta - \frac{k}{n}(\eta_1-\eta_2) - \frac{\eta_2}{n}) \\
                      & \textstyle+ \sum_{(\bar x, \bar y)\in N^{j-1}_{n}(\obs)} \left[P[\bar x, \bar y] \left((L(z, y)-\alpha)\cdot w_n(\bar x, \obs) +\beta + \eta_1 -\eta_2\right) - \nu P[\bar x, \bar y] \log\left(\frac{P[\bar x, \bar y]}{ s \cdot D[\bar x, \bar y]}\right)\right]\\
                      & \textstyle+ \sum_{(\bar x, \bar y)\in N^j_{n}(\obs)\setminus N^{j-1}_{n}(\obs)} \left[P[\bar x, \bar y] \left((L(z, y)-\alpha)\cdot w_n(\bar x,\obs) +\beta + \eta_1\right) - \nu P[\bar x, \bar y] \log\left(\frac{P[\bar x, \bar y]}{ s \cdot D[\bar x, \bar y]}\right)\right]\\
                      & \textstyle+ \sum_{(\bar x, \bar y)\in \Omega_n\setminus \ms N^j_{n}(\obs)} \left[P[\bar x, \bar y] \beta - \nu P[\bar x, \bar y] \log\left(\frac{P[\bar x, \bar y]}{ s \cdot D[\bar x, \bar y]}\right)\right]
  \end{align*}
  The dual function of the primal optimization problem \eqref{eq:nna:primal} is identified with the concave function $g(\alpha, \beta, \eta, \nu) \defn \inf_{P\geq 0,\,s> 0} \mc L(P, s; \alpha, \beta, \nu)$. Using the same manipulations as presented in the proof of Lemma \ref{lemma:nw-dual-representation} we can express the dual function as $g(\alpha, \beta, \eta, \nu) =$
  \begin{align*}
    =  \textstyle\sup_{s> 0} \,  \alpha + s (r \nu - & \textstyle \beta - \frac{k}{n}(\eta_1-\eta_2) - \frac{\eta_2}{n}) +s \nu \sum_{(\bar x, \bar y)\in \Omega_n\setminus N^j_{n}(\obs)} D[\bar x, \bar y] \exp\left(\frac{ \beta }{\nu}-1\right) \\
                                & \textstyle +s \nu \sum_{(\bar x, \bar y)\in N^{j-1}_{n}(\obs)} D[\bar x, \bar y] \exp\left(\frac{(L(z, \bar y)-\alpha) \cdot w_n(\bar x, \obs) + \beta + \eta_1-\eta_2}{\nu}-1\right) \\
                                & \textstyle +s \nu \sum_{(\bar x, \bar y)\in N^j_{n}(\obs)\setminus N^{j-1}_{n}(\obs)} D[\bar x, \bar y] \exp\left(\frac{(L(z, \bar y)-\alpha) \cdot w_n(\bar x,\obs) + \beta + \eta_1}{\nu}-1\right).
  \end{align*}
  Our dual function can be expressed alternatively as 
  \begin{align*}
    g(\alpha, \beta, \eta, \nu) =  \textstyle \Big\{\alpha : r \cdot \nu & \textstyle - \frac{k}{n}(\eta_1-\eta_2) - \frac{\eta_2}{n} + \nu \sum_{(\bar x, \bar y)\in\Omega_n\setminus N^j_{n}(\obs)} D[\bar x, \bar y] \exp\left(\frac{ \beta }{\nu}-1\right)   \\
                                                               & \textstyle + \nu \sum_{(\bar x, \bar y)\in N^{j-1}_{n}(\obs)} D[\bar x, \bar y] \exp\left(\frac{(L(z, \bar y)-\alpha) \cdot w_n(\bar x, \obs) + \beta + \eta_1-\eta_2}{\nu}-1\right)   \\
                                                               & \textstyle + \nu \sum_{(\bar x, \bar y)\in N^j_{n}(\obs)\setminus N^{j-1}_{n}(\obs)} D[\bar x, \bar y] \exp\left(\frac{(L(z, \bar y)-\alpha) \cdot w_n(\bar x, \obs) + \beta + \eta_1}{\nu}-1\right) \leq \beta \Big\} .
  \end{align*}
  The dual optimization problem of the primal problem \eqref{eq:nna:primal} is now found as $\inf_{\alpha, \beta, \nu\geq 0}\, g(\alpha, \beta, \nu)$. As the primal optimization problem in \eqref{eq:nna:primal} is convex, strong duality holds under Slater's condition which is satisfied whenever $r>r^j_n$.  Using first-order optimality conditions, the optimal $\beta^\star$ must satisfy the relationship $\beta^\star =  -\nu + \nu \log (\sum_{(\bar x, \bar y) \in N^{j-1}_{n}(\obs} D[\bar x, \bar y] \exp([(L(z, \bar y)-\alpha) \cdot w_n(\bar x, \obs)  +\eta_1-\eta_2]/\nu)  + \sum_{(\bar x, \bar y) \in N^j_{n}(\obs)\setminus N^{j-1}_{n}(\obs)} D[\bar x, \bar y] \exp([(L(z, \bar y)-\alpha) \cdot w_n(\bar x, \obs) +\eta_1] /\nu) + \sum_{(\bar x, \bar y) \in N^j_{n}(\obs)\setminus N^{j-1}_{n}(\obs)} D[\bar x, \bar y])$.
  Substituting the optimal value of $\beta^\star$ in the back in the dual optimization problem gives
  \begin{align*}
    \inf_{\alpha, \beta, \nu\geq 0}\, g(\alpha, \beta, \eta, \nu)  = & \textstyle\inf_{\alpha, \nu\geq 0}\, g(\alpha, \beta^\star, \eta, \nu) \\
            = & \textstyle \inf \Big\{\alpha\in \Re : \exists \nu\in\Re_+, \exists \eta \in \Re_+^2, ~r\cdot \nu - \frac{k}{n}(\eta_1-\eta_2) - \frac{\eta_2}{n} \cdot \nu \\
            & \qquad + \nu \log (\textstyle\sum_{(\bar x, \bar y)\in N^{j-1}_n(\obs)} \exp({[(L(z, \bar y)-\alpha)\cdot w_n(\bar x, \obs) + \eta_1-\eta_2]}/{\nu})\cdot D[\bar x, \bar y] \\[0.5em]
            & \qquad \textstyle + \sum_{(\bar x, \bar y)\in N^j_n(\obs)\setminus N^{j-1}_n(\obs)} \exp({[(L(z, \bar y)-\alpha)\cdot w_n(\bar x,\obs) + \eta_1]}/{\nu})\cdot D[\bar x, \bar y] \\[0.5em]
            & \qquad \textstyle + \sum_{\Omega_n \setminus N^j_n(\obs)} D[\bar x, \bar y] ) \leq 0 \Big\}.
  \end{align*}
\end{proof}

\subsection{Proof Lemma \ref{lemma:nw-dual-representation}}
\label{ssec:proof-dual-nw}

\begin{proof}
  We will employ standard Lagrangian duality on the convex optimization characterization of the Nadaraya-Watson cost function given in Corollary \ref{cor:nw_formulation}. The Lagrangian function associated with the primal optimization problem  is denoted here at the function 
  \begin{align*}
   \mc L(P, s; \alpha, \beta, \nu) \defn  \textstyle \sum_{(\bar x, \bar y)\in \Omega_n} w_n(\bar x, \obs) & \cdot  L(\bar z, \bar y) \cdot P[\bar x, \bar y] \textstyle  + \left(1-\sum_{(\bar x, \bar y)\in \Omega_n} w_n(\bar x, \obs) \cdot P[\bar x, \bar y]\right)\alpha +  \\
    & \textstyle\left(\sum_{(\bar x, \bar y)\in \Omega_n} P[\bar x, \bar y] -s\right) \beta + \left( r \cdot s - \sum_{(\bar x, \bar y)\in \Omega_n} P[\bar x, \bar y] \log\left(\frac{P[\bar x, \bar y]}{ s \cdot D[\bar x, \bar y]}\right) \right) \nu 
  \end{align*}
  where $P$ and $s$ are the primal variables of the primal optimization problem given in Corollary \ref{cor:nw_formulation}  and $\alpha$, $\beta$ and $\nu$ the dual variables associated with each of its constraints. Collecting the relevant terms in the Lagrangian function results in
  \begin{equation*}
   \mc L(P, s; \alpha, \beta, \nu) = \textstyle\alpha + s (r \nu - \beta) + \sum_{(\bar x, \bar y)\in \Omega_n} \left[P[\bar x, \bar y] \left((L(\bar z, \bar y)-\alpha)\cdot w_n(\bar x, \obs) +\beta\right) - \nu P[\bar x, \bar y] \log\left(\frac{P[\bar x, \bar y]}{ s \cdot D[\bar x, \bar y]}\right)\right]
  \end{equation*}
   The dual function of the primal optimization problem is identified with $g(\alpha, \beta, \nu) \defn \inf_{P\geq 0,\,s> 0} \mc L(P, s; \alpha, \beta, \nu)$. Our dual function can be expressed alternatively as $g(\alpha, \beta, \nu) =$
  \begin{align*}
    & \textstyle \sup_{s> 0} \, \alpha + s\left(r \nu - \beta \right) + {\displaystyle\sup_{P\geq 0}\, \sum_{(\bar x, \bar y)\in \Omega_n}} \left[ P[\bar x, \bar y] \left((L(\bar z, \bar y)-\alpha) \cdot w_n(\bar x,\obs) +\beta\right) - \nu P[\bar x, \bar y] \log\left(\frac{P[\bar x, \bar y]}{ s\cdot D[\bar x, \bar y]}\right)\right] \\
    = & \textstyle \sup_{s> 0} \, \alpha + s\left(r \nu - \beta \right) + {\displaystyle\sum_{(\bar x, \bar y)\in \Omega_n} \sup_{P[\bar x, \bar y]\geq 0}} \left[P[\bar x, \bar y] \left((L(\bar z, \bar y)-\alpha) \cdot w_n(\bar x, \obs) +\beta\right) - \nu P[\bar x, \bar y] \log\left(\frac{P[\bar x, \bar y]}{ s \cdot D[\bar x, \bar y]}\right)\right] \\
    = & \textstyle \sup_{s> 0} \, \alpha + s\left(r \nu - \beta \right) + s \sum_{(\bar x, \bar y)\in \Omega_n} D[\bar x, \bar y][ \sup_{\lambda\geq 0} \, \lambda \left((L(\bar z, \bar y)-\alpha)\cdot w_n(\bar x, \obs) + \beta\right) - \nu \lambda \log\left(\lambda\right)]. \\
    \intertext{The inner maximization problems over $\lambda$ can be dealt with using the Fenchel conjugate of the $\lambda\mapsto\lambda \cdot \log \lambda$ function as}
    = & \textstyle\sup_{s> 0} \, \alpha + s\left(r \nu - \beta \right) +s \nu \sum_{(\bar x, \bar y)\in \Omega_n} D[\bar x, \bar y] \exp\left(\frac{(L(\bar z, \bar y)-\alpha) \cdot w_n(\bar x, \obs) + \beta}{\nu}-1\right)\\
    = & \textstyle\set{\alpha}{r\nu  + \nu \sum_{(\bar x, \bar y)\in \Omega_n} D[\bar x, \bar y] \exp\left(\frac{(L(\bar z, \bar y)-\alpha) \cdot w_n(\bar x, \obs) + \beta}{\nu}-1\right) \leq \beta}.
  \end{align*}
  The dual optimization problem is now found as $\inf_{\alpha, \beta, \nu\geq 0}\, g(\alpha, \beta, \nu)$. As our primal optimization is convex, strong duality holds under Slater's condition which is satisfied whenever $r>0$.  Using first-order optimality conditions, the optimal $\beta^\star$ must satisfy 
  \(
  \beta^\star =  -\nu + \nu \log (\sum_{(\bar x, \bar y)\in \Omega_n} D[\bar x, \bar y] \exp((L(\bar z, \bar y)-\alpha) \cdot w_n(\bar x, \obs)/\nu)).
  \)
  Substituting the optimal value of $\beta^\star$ in the back in the dual optimization problem gives
  \begin{align*}
    \inf_{\alpha, \beta, \nu\geq 0}\, g(\alpha, \beta, \nu)  & = \textstyle\inf_{\alpha, \nu\geq 0}\, g(\alpha, \beta^\star, \nu) \\
    & = \textstyle \inf \set{\alpha\in \Re}{\exists \nu\in\Re_+, \,r\nu + \nu \log\left(\sum_{(\bar x, \bar y)\in \Omega_n} D[\bar x, \bar y] \exp\left(\frac{(L(\bar z, \bar y)-\alpha)\cdot w_n(\bar x,\obs)}{\nu}\right)\right)\leq 0}.
  \end{align*}
\end{proof}

\end{document}